\documentclass[11pt, a4paper]{amsart}
\usepackage[latin1]{inputenc}
\usepackage{amsthm}
\usepackage{amssymb}
\usepackage{tikz}
\usepackage{amsmath}
\usepackage{cancel}
\usepackage{a4wide}
\usepackage{verbatim}
\usepackage[normalem]{ulem}
\usepackage{hyperref}
\usepackage[capitalize]{cleveref}

\usepackage[foot]{amsaddr}
\numberwithin{equation}{section} 
\newtheorem{thm}{Theorem}[section]

\theoremstyle{definition}
\newtheorem{defn}[thm]{Definition}
\theoremstyle{plain}
\newtheorem{prop}[thm]{Proposition}

\newcommand{\func}[4]{\begin{cases}
		#1 \longrightarrow #2  \\
		#3 \mapsto #4
	\end{cases}
}

\newtheorem{lem}[thm]{Lemma}
\newtheorem{cor}[thm]{Corollary}
\theoremstyle{remark}
\newtheorem{rem}[thm]{Remark}

\newtheorem{question}[thm]{Question}

\usepackage{graphicx}
\usepackage{amscd}
\usepackage[all,knot,poly]{xy}
\usepackage[english]{babel}
\usepackage{xcolor}

\usepackage{mathrsfs}
\usepackage{hyperref}
\newcommand{\F}{\mathbb{F}}

\newcommand{\C}{\mathbb{C}}

\newcommand{\Z}{\mathbb{Z}}

\newcommand{\Rad}{{\rm Rad}}
\newcommand{\s}{\text{ss}}


\newcommand{\one}{\mathbb{\mathbf{1}}}

\newcommand{\Spec}{\text{Spec}}

\title[On prime Cayley graphs]{On prime Cayley graphs$^{\ast \ast}$}
\author{Maria Chudnovsky$^{\ast \amalg}$}
\author{Michal Cizek$^{\dagger \ddagger}$}
\author{Logan Crew$^{\mathsection \mathparagraph}$}
\author{J\'an Min\'a\v{c}$^{\dagger \star }$}
\author{Tung T. Nguyen $^\dagger$}
\author{Sophie Spirkl$^{\mathsection \parallel}$}
\author{Nguy$\tilde{\text{\^{e}}}$n Duy T\^{a}n$^{\ddagger \&}$}

\address{$^{\ast}$Princeton University, Princeton, NJ, USA}
\address{$^{\dagger}$ Department of Mathematics, Western University, London, Ontario, Canada N6A 5B7}
\address{$^{\mathsection}$ Department of Combinatorics and Optimization, University of Waterloo, Waterloo, Ontario, Canada}
\address{$^{\ddagger}$ Hanoi University of Science and Technology, Hanoi, Vietnam}

\address{$^{\ast \ast}$ This project was initiated during a visit to Western Academy and was supported by Western Academy for Advanced Research.}
\address{$^{\amalg}$ Supported by NSF-EPSRC Grant DMS-2120644 and by AFOSR grant FA9550-22-1-0083.}
\address{$^{\parallel}$ We acknowledge the support of the Natural Sciences and Engineering Research Council of Canada (NSERC), [funding reference number RGPIN-2020-03912].
Cette recherche a \'et\'e financ\'ee par le Conseil de recherches en sciences naturelles et en g\'enie du Canada (CRSNG), [num\'ero de r\'ef\'erence RGPIN-2020-03912]. This project was funded in part by the Government of Ontario.}
\address{$^{\mathparagraph}$ We acknowledge the support of the Natural Sciences and Engineering Research Council of Canada (NSERC), [funding reference number RGPIN-2022-03093].
Cette recherche a \'et\'e financ\'ee par le Conseil de recherches en sciences naturelles et en g\'enie du Canada (CRSNG), [num\'ero de r\'ef\'erence RGPIN-RGPIN-2022-03093]. }
\address{$^\star$ Supported by the Natural Sciences and Engineering Research Council of Canada ( NSERC) [funding reference number R037A01] and  Western Academy for Advanced Research.}
\address{$^\ddagger$ Supported by   Western Academy for Advanced Research.}
\address{$^\&$ Funded by Vingroup Joint Stock Company and supported by Vingroup Innovation Foundation (VinIF) under the project code
VINIF.2021.DA00030.}

\usepackage[parfill]{parskip}
\date{}

\begin{document}

\keywords{Cayley graphs, joined unions of graphs, homogeneous sets, products of graphs.}
\subjclass[2020]{Primary 05C25, 05C50, 05C51}
\maketitle
\begin{abstract}

The decomposition of complex networks into smaller, interconnected components is a central challenge in network theory with a wide range of potential applications. In this paper, we utilize tools from group theory and ring theory to study this problem when the network is a Cayley graph. In particular, we answer the following question: Which Cayley graphs are prime?
\end{abstract}

\newpage 
\tableofcontents

\section{Introduction}

Network theory plays a fundamental role in the study and understanding of collective behaviors of several systems \cite{boccara2010modeling,strogatz2001exploring, watts1998collective}. For example, it has found applications in neuroscience, ecology, neural dynamics, biology, and many physical systems \cite{banerjee2016chimera, bassett2018nature, dorfler2013synchronization, kinney2005modeling, motter2013spontaneous, silk2018can}.  Typically, we often consider these systems as a single network. However, many real-world systems are multi-layered; namely, the system can have two levels of connections: on the first level, there are connections within each sub-network; on the higher level, there are connections between the sub-networks. While the addition of an extra level of connections provides a more realistic and richer modeling option, it can also bring new challenges in analyzing the dynamics of these systems. The quest to understand the dynamics of phase oscillators on these multilayer networks has sparked a lot of interest due to its wide range of applications (see \cite{boccaletti2014structure, gao2011robustness,gao2012networks, jain2023composed, kenett2015networks, kivela2014multilayer,menara2019stability, schaub2016graph,tiberi2017synchronization}). 
In \cite{nguyen2022broadcasting}, we introduce a new method to study this problem using a reduced and much smaller representation system.
More precisely, we discover a process that allows us to broadcast solutions from the reduced system to the original multilayer network. Amongst its various applications, our new approach offers a simple way to find equilibrium points and analyze their stability on a multilayer network. It is worth mentioning that our approach is based on a concept in graph theory known as the ``joined union'' of graphs which we now recall. Let $G$ be a graph with $d$ vertices $\{v_1, v_2, \ldots, v_d\}$. Let $G_1, G_2, \ldots, G_d$ be graphs representing layers in a given multilayer network. The joined union $G[G_1, G_2, \ldots, G_d]$ is obtained from the union of $G_1, \ldots, G_d$ by joining with an edge each pair of a vertex from $G_i$ and a vertex from $G_j$ whenever $v_i$ and $v_j$ are adjacent in $G$ (see \cite{joined_union2, CM1, joined_union} for some further discussions). In this framework, $G$ represents precisely the connections between layers.

Due to its potential wide range of applications as explained above, the following question seems natural.

\begin{question} \label{que: fundamental}
    Given a graph $\Gamma$, can we decompose it as the joined union of smaller graphs? 
\end{question}

\subsection{Homogeneous sets}

It turns out that the above question has been extensively studied by graph theorists but from a different perspective. Specifically, it is studied through the theory of modular decomposition using homogeneous sets which we now recall (see, for example, \cite{brandstadt1999graph, gallai1967transitiv}).  A \emph{homogeneous set} or \emph{module} in a graph $\Gamma$ is a set $X$ of vertices of $\Gamma$ such that every vertex in $V(\Gamma) \setminus X$ is adjacent to either all or none of the vertices in $X$. Thus, vertices in $V(\Gamma) \setminus X$ do not ``distinguish'' vertices in $X$. This makes homogeneous sets a natural generalization of connected components (which are, in particular, homogeneous sets $X$ such that every vertex outside $X$ is non-adjacent to every vertex in $X$). Note that $V(\Gamma)$ as well as all vertex sets of size at most one are homogeneous sets; we refer to homogeneous sets $X$ with $2 \leq X < |V(\Gamma)|$ as \emph{non-trivial homogeneous sets}. We remark that the theory of homogeneous sets is closely related to \cref{que: fundamental} in the sense that for a graph join $\Gamma = G[G_1, G_2, \ldots, G_d]$, each copy of $G_i$ is a homogeneous set of $\Gamma$. Conversely, a graph contains a non-trivial homogeneous set if and only if it can be composed as $G[G_1, G_2, \ldots, G_d]$ where at least one of the $G_i$ has more than one vertex.

Homogeneous sets are a well-studied structure in graph theory, and can be found in polynomial time \cite{cournier1993quelques, muller1989incremental, spinrad1992p4}. Their inverse operation is \emph{substitution}: a vertex $v$ in a graph $\Gamma$ is replaced by a graph $H$ with vertex set $X$, and for every vertex $u$ in $V(\Gamma) \setminus \{v\}$, we add all edges from $u$ to $X$ if and only if $uv \in E(\Gamma)$; so $X$ is a homogeneous set in the resulting graph. 

The class of perfect graphs plays an important role in graph theory \cite{berge1961deren, chudnovsky2006strong}  and optimization \cite{grotschel1984polynomial}. This class is closed under substitution \cite{lovasz1972normal}, meaning that if $\Gamma$ and $H$ are perfect, then the graph obtained by substituting $H$ for a vertex of $\Gamma$ is again a perfect graph.  In addition, and in part due to this connection with perfect graphs, homogeneous sets are useful in more than one way  \cite{alon2001ramsey, chudnovsky2014erdos, chudnovsky2008erdHos} in efforts to prove the Erd\H{o}s-Hajnal conjecture \cite{erdos1989ramsey}, one of the most notable open questions in structural and extremal graph theory. Substitution also interacts well with twin-width \cite{bonnet2021twin} (that is, substituting a graph $H$ for a vertex of a graph $\Gamma$ results in a graph whose twin-width is the maximum of the twin-widths of $\Gamma$ and $H$). Twin-width is a parameter describing the complexity of a structure;  it is used not only for graphs, but also for groups (via Cayley graphs) \cite{bonnet2022twin} and permutations as well as other binary structures \cite{bonnet2021twin2}.

In this paper, we will concentrate on the case where $\Gamma$ is a Cayley graph (see \cite{cayley1878desiderata, krebs2011expander}). 
We consider the following question: Which properties of $G$ and $S$ lead to the existence of a non-trivial homogeneous set in $\text{Cay}(G,S)$? Which Cayley graphs are \emph{prime}, that is, they do not admit a non-trivial homogeneous set? Because Cayley graphs are highly symmetric, the criteria for the presence of a homogeneous set turn out to be similar to those given by Barber \cite{barber2021recognizing} for $\text{Cay}(G, S)$ to admit a decomposition as a wreath product of smaller graphs.

\subsection{Outline}

This paper is meant to be as self-contained as possible so that it is accessible to readers possessing different backgrounds. The structure of our article is as follows. In \cref{sec:def}, we recall some standard concepts in graph theory. In \cref{sec:homogeneous_set_cayley},  we will then establish a connection between the existence of non-trivial homogeneous sets and decompositions as wreath products for vertex-transitive graphs. This leads to a criterion for when Cayley graphs are prime. Finally, we will briefly look into the case of directed graphs and show analogous results for them as well.  \cref{sec:ring_cayley} studies Cayley graphs associated with a ring $R$. Here, the interplay between the additive and multiplicative structure of $R$ plays a fundamental role in our investigation. 

Amongst the various results that we prove in this section, we give a complete classification of prime unitary graphs.

\section{Definitions and notation}
\label{sec:def}
Throughout this article, unless specified otherwise, all graphs are undirected, simple, and finite. One can define an undirected simple finite graph as follows:

\begin{defn}[Graph]
  A simple undirected finite graph $\Gamma$ is a pair $(V(\Gamma) , E(\Gamma ))$, where $V(\Gamma )$ is a finite set and $E(\Gamma )$ is an anti-reflexive symmetric relation on $\Gamma$. In other words, $E(\Gamma)$ is a set of pairs $(x,y) \in V(\Gamma )^{2}$, such that $(x,x) \notin E(\Gamma )$ for every $x \in V(\Gamma )$, and if $(x,y) \in E(\Gamma )$, then $(y,x) \in E(\Gamma )$.

  The set $V(\Gamma )$ is called the set of vertices of $\Gamma$, and $E(\Gamma )$ is the set of edges. For a pair $x, y$ of vertices, we say that $xy$ is an edge (or that $x$ and $y$ are adjacent) if $(x,y) \in E(\Gamma )$. 
\end{defn}

\begin{defn}[Graph morphism] \label{def:graph_morphism}
 Let $\Gamma$ and $\Delta$ be two graphs. We define a graph morphism between $\Gamma$ and $\Delta$ to be a map from $V(\Gamma )$ to $V(\Delta)$ $f$, such that if $u,v \in V(\Gamma )$ are two adjacent vertices in $\Gamma$, then $f(u),f(v)$ are two adjacent vertices in $\Delta$.
\end{defn}

The class of graphs we are interested in particular is the class of Cayley graphs. We define a Cayley graph as follows:

\begin{defn}[Cayley graph]
  Let $G$ be a group and $S \subseteq G\setminus \{1\}$ a set such that if $s \in S$, then $s^{-1} \in S$.
  We define the Cayley graph with generators in $S$ to be the graph $\Gamma = \text{Cay}(G,S)$, with $V(\Gamma) = G$ and $E(\Gamma) = \{(x,y) \in G^{2} | x^{-1}y \in S\}$.
\end{defn}

We will now introduce the definition of a group action on graphs.

\begin{defn}[Group acting on a graph]
  Let $G$ be a group and $\Gamma$ a graph. We say that $G$ acts on $\Gamma$ if there is an action of $G$ on  $V(\Gamma )$, such that if $xy$ is an edge in $\Gamma$, then $[g\cdot x] [g\cdot y]$ is an edge in $\Gamma$.

  We call a graph $\Gamma$ vertex-transitive if there is a group $G$ acting on $\Gamma$ such that the action of $G$ on $V(\Gamma )$ is transitive.
\end{defn}

Now we will define the wreath product of graphs.

\begin{defn}[Wreath product]
  Let $\Gamma ,\Delta$ be two graphs. We define the wreath product (also known as the lexicographic product) of $\Gamma$ and $\Delta$ as the graph $\Gamma \cdot \Delta$, whose set of vertices is $V(\Gamma ) \times V(\Delta )$ and such that $(x,y)(x',y')$ is an edge in $\Gamma \cdot \Delta$ if either $xx'$ is an edge in $\Gamma$, or $x=x'$ and $yy'$ is an edge in $\Delta$.
\end{defn}

\begin{rem} \label{rem:wreath_join}
    The wreath product is a special case of the joined union. Specifically, the wreath product of $\Gamma$ and $\Delta$ is exactly the joined union $\Gamma[\Delta, \cdots, \Delta]$.
\end{rem}

\begin{defn}[Connectedness]
  Let $\Gamma$ be a graph. We call connected components of $\Gamma$, the equivalence classes on $V(\Gamma )$ for the following relation:
  Two vertices $x$ and $y$ are equivalent if and only if there exists a finite sequence of vertices $x_{1}, \cdots , x_{n}$ such that for all $i$ between $1$ and $n-1$, $x_{i}$ and $x_{i+1}$ are either adjacent or equal, with $x_{1}=x$ and $x_{n}=y$.
  We say that the graph $\Gamma$ is connected if it has exactly one connected component.
\end{defn}

\begin{defn}[Induced subgraph]
  Let $\Gamma$ be a graph and let $H \subseteq V(\Gamma )$ be a non-empty subset. We define $\Gamma[H]$ to be the subgraph of $\Gamma$ induced by $H$, that is: $V(\Gamma[H]) = H$ and $E(\Gamma[H]) = \{ (x, y) \in E(\Gamma) | x, y \in H\}$.
\end{defn}

\begin{defn}[Graph complement]
    Given a graph $\Gamma$, the complement $\Gamma^*$ of $\Gamma$ is the graph such that for distinct $x, y \in V(\Gamma)=V(\Gamma^*)$, we have that $xy$ is an edge in exactly one of $\Gamma$ and $\Gamma^*$.

    A graph is anti-connected if its complement is connected. 
\end{defn}

\begin{defn}[Valence]
 If $\Gamma$ is a graph and $v \in V(\Gamma )$ a vertex, we define the valence (or degree ) of $v$, as the number of vertices adjacent to $v$.
\end{defn}

Finally, we recall the definition of homogeneous sets.

\begin{defn}[Homogeneous sets]
  Let $\Gamma$ be a graph and $H$ a non-empty subset of $V(\Gamma )$. We say that $H$ is homogeneous (in $\Gamma$) if, for every $x\notin H$, either $x$ is adjacent to all elements of $H$ or to none. A homogeneous set $H$ is non-trivial if $2 \leq |H| < |V(\Gamma)|$. 
\end{defn}

We will show how the notion of a homogeneous set can help to detect whether a vertex-transitive graph can be written as a non-trivial wreath product and we will apply it more specifically to Cayley graphs, as well as more precisely to Cayley graphs over rings.

\section{Homogeneous sets and Cayley graphs}
\label{sec:homogeneous_set_cayley}
\subsection{Homogeneous sets and the wreath product}\leavevmode

First note that any graph $\Gamma$ is isomorphic to a wreath product of the form $* \cdot \Gamma$ and $\Gamma \cdot *$, where $*$ is a graph with a single vertex. Therefore, we call such a product \emph{trivial}. The question that we will examine is when a given graph is isomorphic to a non-trivial wreath product. Note that if $\Gamma \cdot \Delta$ is a non-trivial wreath product, then it has non-trivial homogeneous sets of the form $\{x\} \times V(\Delta )$, where $x$ is a vertex in $V(\Gamma )$. 

From this simple observation, we have a necessary condition: that every graph that is a non-trivial wreath product has a non-trivial homogeneous set.
We call a graph with no non-trivial homogeneous sets \emph{prime}. 

The presence non-trivial homogeneous set does not imply that a graph is a non-trivial wreath product: Consider the graph with vertices $\{1,2,3\}$ such that $1$ is adjacent to $2$ and $3$ and $2,3$ are non-adjacent:

\begin{tikzpicture}
  \node(A) at (0,0){$1$};
  \node(B) at (1,-1){$2$};
  \node(C) at (1,1){$3$};
  \draw[-] (A) to (B);
  \draw[-] (A) to (C);
\end{tikzpicture}

Note that the set $\{2,3\}$ is homogeneous and non-trivial and so the graph is not prime, but it is not a non-trivial wreath product, because it has $3$ vertices and $3$ is a prime number.

We can however show that for vertex-transitive graphs, except for two extreme cases, namely \emph{complete} graphs (all vertices are adjacent) and \emph{cocomplete} graphs (no vertices are adjacent), the converse is true.

For that, we will need to establish a simple lemma (we believe this is well-known, but include a proof for completeness):
\begin{lem} \label{maximum lemma}
 Let $\Gamma$ be a graph and let $H,H'$ be two non-disjoint homogeneous sets. Then $H \cup H'$ is a homogeneous set.
\end{lem}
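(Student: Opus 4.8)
The plan is to verify the defining property of a homogeneous set directly for $H \cup H'$. So I would fix an arbitrary vertex $x \in V(\Gamma) \setminus (H \cup H')$ and aim to show that $x$ is adjacent either to all of $H \cup H'$ or to none of it. Since $x$ lies outside both $H$ and $H'$, homogeneity of each of the two sets already tells me that $x$ is adjacent to all of $H$ or to none of $H$, and, independently, to all of $H'$ or to none of $H'$. The whole content of the lemma is that these two binary choices cannot be made inconsistently, and the tool that forces consistency is the non-disjointness hypothesis.

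Concretely, I would pick an element $z \in H \cap H'$, which exists precisely because $H$ and $H'$ are non-disjoint; this $z$ is the bridge between the two sets. The argument then splits into two cases according to how $x$ sees $H$. If $x$ is adjacent to every vertex of $H$, then in particular $xz$ is an edge; but $z \in H'$, so by homogeneity of $H'$ the vertex $x$ cannot be adjacent to none of $H'$, forcing $x$ to be adjacent to all of $H'$, and hence to all of $H \cup H'$. Symmetrically, if $x$ is adjacent to no vertex of $H$, then $xz$ is a non-edge with $z \in H'$, so homogeneity of $H'$ rules out $x$ being adjacent to all of $H'$, forcing $x$ to be adjacent to none of $H'$, and hence to none of $H \cup H'$.

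In either case $x$ is adjacent to all of $H \cup H'$ or to none of it, which is exactly the homogeneity condition for $H \cup H'$; since $x$ was arbitrary, the proof concludes. I do not expect any real obstacle here: the statement is essentially a transitivity observation, and the only point requiring a little care is to remember that $x \notin H \cup H'$ gives $x \notin H$ \emph{and} $x \notin H'$ simultaneously, so that the homogeneity hypothesis is legitimately applicable to both sets at the chosen $x$. The non-disjointness is used exactly once, to produce the common witness $z$ that synchronizes the two dichotomies.
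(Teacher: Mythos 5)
Your proof is correct and follows essentially the same route as the paper's: both arguments use a common vertex $z \in H \cap H'$ as the bridge that transfers the adjacency dichotomy from one homogeneous set to the other. If anything, your explicit two-case structure is slightly more complete in presentation, since the paper only spells out the case where the outside vertex has a neighbour in $H \cup H'$, leaving the all-non-adjacent case implicit.
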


\begin{proof}
  Take $x \in H \cap H'$. Take $y \notin H \cup H'$, such that $y$ is adjacent to some element in $z \in H\cup H'$. Without loss of generality, we may assume that $z \in H$, since there is symmetry between $H$ and $H'$. Since $y$ and $z$ are adjacent, it follows $y$ is adjacent to every element in $H$ and in particular to $x$. Since $x$ is also an element of $H'$, this implies that $y$ is adjacent to every element in $H'$ and thus $H\cup H'$ is homogeneous.
\end{proof}

Now we will show a criterion for when a vertex-transitive graph is a non-trivial wreath product.

\begin{thm} \label{homogeneous product equivalence}
Let $\Gamma$ be a vertex-transitive graph that is not complete nor cocomplete. Then $\Gamma$ is a non-trivial wreath product if and only if it is not prime.
\end{thm}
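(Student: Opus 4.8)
The plan is to prove the nontrivial direction: assume $\Gamma$ is vertex-transitive, not complete nor cocomplete, and not prime, and produce a nontrivial wreath decomposition. By \cref{maximum lemma} the union of two non-disjoint homogeneous sets is homogeneous, so among all non-trivial homogeneous sets I would pick one, say $H$, that is inclusion-maximal (or of maximum cardinality). The key structural claim I expect to establish is that the maximal homogeneous sets \emph{partition} $V(\Gamma)$ into blocks that all ``look the same,'' and that $\Gamma$ is the wreath product $\Gamma/H \cdot \Gamma[H]$, where $\Gamma/H$ is the quotient graph obtained by contracting each block to a single vertex. The converse direction is already recorded in the discussion preceding the theorem (a nontrivial wreath product $\Gamma \cdot \Delta$ has the homogeneous sets $\{x\} \times V(\Delta)$), so only this forward direction needs work.

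The first step is to translate vertex-transitivity into a statement about homogeneous sets: since the automorphism group $G$ acts transitively on $V(\Gamma)$ and automorphisms carry homogeneous sets to homogeneous sets of the same size, for any maximum-size non-trivial homogeneous set $H$ and any vertex $v$, there is a maximum homogeneous set containing $v$ (namely $g\cdot H$ for suitable $g$). Next I would argue that two maximum homogeneous sets are either equal or disjoint: if $H$ and $H'$ are both of maximum size and intersect nontrivially, then $H \cup H'$ is homogeneous by \cref{maximum lemma} and strictly larger unless $H = H'$, contradicting maximality (here I must also rule out $H \cup H' = V(\Gamma)$, which is where the ``not complete nor cocomplete'' hypothesis enters — see below). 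Combining these two facts shows the translates of $H$ tile $V(\Gamma)$ into equal-sized, pairwise-disjoint, isomorphic blocks $B_1, \ldots, B_d$, each being a copy of the induced subgraph $\Delta := \Gamma[H]$.

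The heart of the argument is then to check that $\Gamma \cong (\Gamma/H) \cdot \Delta$, i.e.\ that the adjacency between two blocks $B_i, B_j$ is ``all-or-nothing.'' This is immediate from homogeneity: because $B_i$ is homogeneous, every vertex outside $B_i$ — in particular every vertex of $B_j$ — is adjacent either to all of $B_i$ or to none of it; running this over all vertices of $B_j$ and using that $B_j$ is likewise homogeneous forces the bipartite graph between $B_i$ and $B_j$ to be either complete or empty. Defining $\Gamma/H$ to have vertex set $\{B_1,\ldots,B_d\}$ with $B_iB_j$ an edge exactly when the blocks are completely joined, one verifies directly from the wreath-product definition that the map $(B_i, x) \mapsto x$ is a graph isomorphism $(\Gamma/H)\cdot\Delta \to \Gamma$. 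The decomposition is nontrivial because $2 \le |H| < |V(\Gamma)|$ gives both factors at least two vertices.

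The main obstacle — and the only place the excluded cases matter — is ruling out $H \cup H' = V(\Gamma)$ in the disjointness step, and more fundamentally guaranteeing that a maximal non-trivial homogeneous set is strictly smaller than $V(\Gamma)$ so that the block structure is genuine. If $\Gamma$ is neither complete nor cocomplete one knows $\Gamma$ and its complement are both ``nondegenerate,'' and I expect the cleanest route is to show that a maximum homogeneous set $H$ cannot satisfy $|H| = |V(\Gamma)| - 1$ or combine with a translate to fill all of $V(\Gamma)$: were that to happen, tracing the all-or-nothing adjacencies would force $\Gamma$ to be a complete or cocomplete join of the blocks, and since vertex-transitivity makes all blocks isomorphic and all inter-block relations uniform, iterating would collapse $\Gamma$ to a complete or cocomplete graph, contradicting the hypothesis. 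Making this collapse argument precise — carefully handling the possibility that the quotient $\Gamma/H$ is itself complete or cocomplete yet $\Gamma$ is not — is the delicate point I would spend the most care on.
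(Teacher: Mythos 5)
Your outline follows the paper's strategy only in the case where $\Gamma$ is both connected and anti-connected, and there it is essentially right: a maximal homogeneous set, its translates under the transitive group action, \cref{maximum lemma} for the equal-or-disjoint dichotomy, all-or-nothing adjacency between blocks, and the quotient-graph isomorphism. But the step you yourself flag as delicate --- ruling out $H \cup H' = V(\Gamma)$ --- contains a genuine gap, and the contradiction you propose is false as stated. Consider $\Gamma = 3K_2$ (three disjoint edges): it is vertex-transitive, neither complete nor cocomplete, and the unions of any two of its components, e.g.\ $C_1 \cup C_2$ and $C_2 \cup C_3$, are inclusion-maximal (indeed maximum-size) non-trivial homogeneous sets that intersect non-trivially and whose union is all of $V(\Gamma)$. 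So maximal homogeneous sets do \emph{not} partition the vertex set here, no contradiction with the hypotheses can be ``traced,'' and your block construction simply does not get off the ground. The complement $K_{2,2,2}$ gives the dual failure. The theorem of course still holds for these graphs (they are wreath products over their components, resp.\ anti-components), but your uniform argument cannot produce that decomposition.

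What your proposal is missing is the paper's two-part remedy. First, a case split: if $\Gamma$ is disconnected, use the connected components themselves as the blocks (the quotient is edgeless), and if $\Gamma$ is not anti-connected, pass to the complement; the maximal-homogeneous-set machinery is reserved for the connected and anti-connected case. Second, within that remaining case, the paper rules out $gH \cup H = V(\Gamma)$ not by an iterated collapse but by a degree count that your sketch never mentions: connectivity supplies a vertex $y \notin H$ adjacent to some, hence (by homogeneity) all, of $H$, giving $d \geq |H|$; anti-connectivity supplies a vertex $y'$ non-adjacent to all of $H$, giving $d \leq |V(\Gamma)| - |H|$; and vertex-transitivity makes the degree $d$ constant, so $|H| \leq |V(\Gamma)|/2$ and $|gH \cup H| = 2|H| - |gH \cap H| < |V(\Gamma)|$ whenever the translates meet. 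As the $3K_2$ example shows, this bound genuinely requires both connectivity hypotheses, which is exactly why the case analysis cannot be avoided. Without the case split and the degree argument, your proof does not close.
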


\begin{proof}
  We have already observed that if $\Gamma $ is a non-trivial wreath product then it has non-trivial homogeneous sets.

  Now conversely assume that $\Gamma $ has a homogeneous set $X$.
  By assumption, $\Gamma$ is vertex-transitive and thus there exists $G$ a group acting on $\Gamma$, such that $G$ acts transitively on $V(\Gamma )$.
  We will now distinguish three cases:

  \textbf{Case 1:} Assume that $\Gamma$ is a disconnected graph. Consider then $C$ a connected component. Since by assumption, $\Gamma$ is not a graph with no edges, we can assume additionally that $|C|>1$. Note that if $C'$ is another connected component, then since $G$ acts transitively on $\Gamma$, there exists $g \in G$, such that $gC=C'$. Now choose $g_{1}, \cdots ,g_{n} \in G$, such that for every $C'$ connected component in $C$, there exists a unique $i$ such that $C'=g_{i}C$.

  Now take $A$ to be the graph whose vertices are $g_{1},\cdots ,g_{n}$ and with no edges. Let us denote by $\Delta$ the subgraph induced by $C$. Note then that $\Gamma$ is isomorphic to $A\cdot \Delta$ with the isomorphism being the map that to $(g_{i},x)$ associates $g_{i}\cdot x$.

  \textbf{Case 2:} Assume now that $\Gamma$ is not anti-connected. Consider $\Gamma^{*}$ the complement graph to $\Gamma$: that is a graph with vertices $V(\Gamma )$ and for two distinct vertices $x,y$, $xy$ is an edge in $\Gamma^{*}$ if and only if $xy$ is not an edge in $\Gamma$.

  Now if $\Gamma$ is not anti-connected, then its complement $\Gamma^{*}$ is not connected and not the graph with no edges, and so by the first case, there exist two graphs $A,H$ such that $\Gamma^*$ is isomorphic to the non-trivial wreath product $A\cdot \Gamma[H]$. Therefore $\Gamma$ is isomorphic to the non-trivial wreath product $A^{*} \cdot \Gamma^*[H]$.

  \textbf{Case 3:} Finally assume that $\Gamma$ is both connected and anti-connected. As a reminder, $X$ denotes a non-trivial homogeneous set of $\Gamma$. Choose a distinguished point $x_{0} \in X$ and pick $H$ a maximal (in the sense of inclusion) non-trivial homogeneous set of $\Gamma$ containing $x_{0}$. Such a maximal set exists since $\Gamma$ is finite and thus any non-empty set of subsets  of $V(\Gamma )$ has a maximal element. Let us now show that for every $g \in G$, either $gH \cap H=\emptyset$ or $gH=H$. Suppose that $gH \cap H \neq \emptyset$. Note that by \ref{maximum lemma} $gH \cup H$ is a homogeneous set containing $H$ so by maximality of $H$, either $gH\cup H =V(\Gamma )$, or $gH\cup H=H$. Let us show that $gH\cup H \neq V(\Gamma )$. In order to prove this, we will first show that $|H|\leq \frac{|V(\Gamma )|}{2}$. Since $\Gamma$ is connected and $H \neq V(\Gamma )$, there exists $y \notin H$ such that $y$ is adjacent to an element of $H$. The valence of $y$ then has to be at least $|H|$, since $y$ is connected to all elements of $H$.
  Now, since $\Gamma$ is also anti-connected, we see that $\Gamma^{*}$ is connected. Therefore there exists a $y' \in V(\Gamma ) \setminus H$ such that $y'$ is adjacent to some vertex in $H$ in the graph $\Gamma^{*}$. Therefore $y'$ is not adjacent to some element in $H$ in the original graph $\Gamma$. Since $H$ is homogeneous, it follows that $y'$ is not adjacent to any element in $H$ and thus the valence of $y'$ is at most $|V(\Gamma )|-|H|$. Since the graph $\Gamma$ is vertex-transitive, all of its vertices have the same valence, which we shall denote $d$. Since the valence of $y$ is greater than $|H|$, we have that $d \geq |H|$ and since the valence of $y'$ is at most $|V(\Gamma )|$, we also have $d\leq |V(\Gamma )|$. As such $$|H| \leq d \leq |V(\Gamma )-|H|$$
  Therefore:
  \begin{align*}
      |gH \cup H| & = |gH| +|H| -|gH\cap H| \\
                  & = 2|H| - |gH \cap H| \\
                  & \leq |V(\Gamma )| - |gH \cap H| \\
                  & < V(\Gamma )
  \end{align*}
  as we assume that $gH \cap H \neq \emptyset $.
  Since $H$ is the maximal homogeneous subset of $V(\Gamma )$ containing $x_{0}$ and distinct from $V(\Gamma )$, we get that $gH \cup H \subseteq H$ and thus $gH = H$, since it has the same number of elements as $H$.

  As a consequence, it follows that for all $g,g' \in G$, $gH=g'H$ or $gH \cap g'H =\emptyset$. Then, choose $g_{1}, \cdots, g_{n} \in G$, such that $g_{1} H , \cdots ,g_{n}H$ are all distinct and $g_{1} H \cup \cdots \cup g_{n}H =V(\Gamma )$, which is possible, since $G$ acts transitively on $\Gamma$. Define a graph $A$ whose vertices are $g_{1} , \cdots ,g_{n}$ and $g_{i} g_{j}$ is an edge in $A$ if $[g_{i}\cdot x_{0}] \ [g_{j}\cdot x_{0}] $ is an edge in $\Gamma$ (with $x_{0}$ the distinguished element in $H$). Now let us show that $\Gamma$ is isomorphic to $A\cdot \Gamma[H]$.

  Consider the map $$\Phi =\func{V(A) \cdot H}{\Gamma}{(g_{i},h)}{g_{i} \cdot h}$$ and let us show first that this map is a morphism of graphs. Suppose that $(g_{i},h)$ and $(g_{j},h')$ are adjacent. Then first assume that $g_{i}$ and $g_{j}$ are adjacent. Then we get that $g_{i}x_{0}$ and $g_{j}x_{0}$ are adjacent. Since $g_{j}H$ is homogeneous, this proves that $g_{i}x_{0}$ is adjacent to $g_{j}\cdot h'$. Since $g_{j}\cdot h'$ is adjacent to at least one element in $g_{i}H$ and $g_{i}H$ is homogeneous, it follows that $g_{j} \cdot h'$ is adjacent to $g_{i}\cdot h \in g_{i}H$. Consequently $\Phi (g_{i},h)$ and $\Phi (g_{j},h')$ are adjacent. Now instead assume that $g_{i}=g_{j}$ and $h,h'$ are adjacent. Then $g_{i}h$ and $g_{i}h'=g_{j}h'$ are adjacent and so $\Phi (g_{i},h)$ and $\Phi (g_{j},h')$ are adjacent.

  Now we shall prove that $\Phi$ is a bijection. To prove the injectivity, assume that $g_{i}h=g_{j}h'$. Then $g_{i}h \in g_{i}H \cap g_{j} H$, proving that $g_{i} H \cap g_{j}H \neq \emptyset$ and thus by construction $g_{i}=g_{j}$. And so $g_{i}h=g_{i}h'$ and therefore we also conclude that $h=h'$.

  Now let us prove the surjectivity of $\Phi$. Take $y \in V(\Gamma )$. Then, by construction, there exists $i$ such that $y \in g_{i}H$ and so $y \in {\rm im}(\Phi )$.

  Finally, let us prove that the inverse of $\Phi$ is also a morphism. Assume that $yy'$ is an edge in $\Gamma$. Pick $h,h'$ and $i,j$, such that $g_{i}h=y$ and $g_{j}h'=y'$. First, if we assume that $g_{i}=g_{j}$, then $hh'$ is an edge. Now if we assume that $g_{i} \neq g_{j}$, then since $g_{i}h$ is adjacent to $g_{j}h'$, and since $g_{i}H$ and $g_{j}H$ are both homogeneous, we get that $g_{i}x_{0}$ is adjacent to $g_{j}x_{0}$ and so $g_{i}g_{j}$ is an edge in $A$.

  Note finally that $A$ has more than one element, because $H \neq V(\Gamma )$ and thus we need at least two $g_{i}$'s to cover $V(\Gamma )$. Observe also that $H$ contains at least two elements as $X \subseteq H$ and $X$ is a non-trivial homogeneous set.

  We conclude that $\Gamma$ is a non-trivial wreath product.
\end{proof}
\begin{rem}

Note that a similar result has been established in the thesis of Rachel V. Barber \cite{barber2021recognizing} (Lemma 1, page 12) for digraphs.
The lemma states that a vertex-transitive digraph is a wreath product if and only if it can be decomposed into a ``block system'' with the property that if $B,B'$ are two blocks, then either for all vertices $b\in B$ and $b'\in B'$, $(b,b')$ is an edge, or all vertices $b \in B,b'\in B'$ are not connected by an edge. What we show here is that for undirected graphs, which are neither complete nor cocomplete, such a block system exists provided that there is a homogeneous set. For graphs that are not connected (resp.\ anti-connected), such block system consists of the connected (resp.\ anti-connected) components; and for graphs that are both connected and anti-connected, such blocks are maximal homogeneous sets.

In light of this theorem, provided we disregard the complete graphs and graphs with no edges, the existence of block systems and the existence of homogeneous sets are equivalent, and thus in the parts that follow, we will simply focus on homogeneous sets.
\end{rem}

\subsection{Homogeneous sets in Cayley graphs} \leavevmode

In this subsection, we will prove the following theorem:

\begin{thm} \label{thm:subgroup_homogeneous}
  Let ${\rm Cay}(G,S )$ be a Cayley graph such that $S \neq G\setminus \{1\}$ and $S\neq \emptyset$. Then ${\rm Cay}(G,S)$ has a non-trivial homogeneous set if and only if there exists a non-trivial subgroup $H$ of $G$ such that if $c \in S \setminus H$, then the right coset $Hc$ is included in $S$.

  Furthermore, if ${\rm Cay}(G,S)$ is both connected and anti-connected, we may choose $H$ to be the maximal homogeneous set containing $1$; and this $H$ is a subgroup of $G$. 
\end{thm}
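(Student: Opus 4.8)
The plan is to exploit that $\text{Cay}(G,S)$ is vertex-transitive under the left-multiplication action $g\cdot x = gx$, which preserves edges since $(gx)^{-1}(gy)=x^{-1}y$. I would prove the two implications separately, treating the backward direction as a direct coset computation and reserving a case analysis for the forward direction. For the backward direction, suppose $H$ is a non-trivial subgroup with $Hc\subseteq S$ whenever $c\in S\setminus H$, and I claim the vertex set $H$ is itself homogeneous. If a vertex $x\notin H$ is adjacent to some $h\in H$, then $c:=h^{-1}x\in S$, and $c\notin H$ (else $x=hc\in H$), so $Hc\subseteq S$; then for every $h'\in H$ one computes $(h')^{-1}x=\big((h')^{-1}h\big)c\in Hc\subseteq S$, so $x$ is adjacent to all of $H$. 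Since $2\le|H|<|G|$, this exhibits a non-trivial homogeneous set.

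For the forward direction I would mirror the three-case split of \cref{homogeneous product equivalence}. If $\text{Cay}(G,S)$ is disconnected, the component of $1$ is the subgroup $\langle S\rangle$, which is non-trivial (as $S\neq\emptyset$ and the graph is disconnected) and satisfies the coset condition vacuously since $S\setminus\langle S\rangle=\emptyset$. If the graph is connected but not anti-connected, I would pass to the complement $\text{Cay}(G,S^\ast)$ with $S^\ast=G\setminus(S\cup\{1\})$, which is then disconnected. The key observation is that the coset condition is complement-invariant: for a subgroup $H$ it is equivalent to the statement that every non-identity right coset of $H$ lies entirely inside or entirely outside $S$, and for a non-identity coset $C$ one has $C\cap S^\ast=C\setminus S$, so this in/out dichotomy is unchanged upon replacing $S$ by $S^\ast$. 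Hence $H=\langle S^\ast\rangle$ settles this case.

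The crux is the connected and anti-connected case. Here I take $H$ to be a maximal non-trivial homogeneous set containing $1$, obtained by translating a given non-trivial homogeneous set by left multiplication so that it meets $1$. From the argument of Case 3 of \cref{homogeneous product equivalence} I obtain the dichotomy that $gH=H$ or $gH\cap H=\emptyset$ for every $g\in G$. The decisive step is then $\{g\in G:gH=H\}=H$: the inclusion $\supseteq$ holds because $h\in hH\cap H$ forces $hH=H$, while $\subseteq$ holds because $gH=H$ gives $g=g\cdot 1\in H$. Thus $H$ is a setwise stabilizer, hence a subgroup, and the coset condition follows from homogeneity: if $c\in S\setminus H$ then $1\sim c$, so $h^{-1}c\in S$ for every $h\in H$, which (as $H$ is closed under inversion) says exactly $Hc\subseteq S$. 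For the ``furthermore'' I would upgrade ``a maximal'' to ``the maximal'' by proving uniqueness: any two such sets $M_1,M_2$ are subgroups of order $\le|G|/2$ sharing $1$, so $|M_1\cup M_2|\le|G|-1<|G|$, whence \cref{maximum lemma} makes $M_1\cup M_2$ a non-trivial homogeneous set and maximality gives $M_1=M_2$.

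The main obstacle I anticipate is precisely this connected and anti-connected case: choosing the correct canonical homogeneous set and recognizing that its setwise stabilizer coincides with the set itself, which is what converts a combinatorial object into an algebraic one. The secondary technical point is the complement-invariance of the coset condition, which lets the non-anti-connected case be reduced to the disconnected case rather than argued anew.
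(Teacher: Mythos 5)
Your proof is correct and follows essentially the same route as the paper: the same three-case split (disconnected via $H=\langle S\rangle$, non-anti-connected via the subgroup generated by the complement of $S$, and the connected-and-anti-connected case via a maximal homogeneous set containing $1$ together with the dichotomy $gH=H$ or $gH\cap H=\emptyset$ drawn from \cref{homogeneous product equivalence}, then reading off the subgroup property and $Hc\subseteq S$ from homogeneity at the vertex $1$). The only differences are presentational but welcome: you spell out the easy backward direction (that such a subgroup $H$ is itself a non-trivial homogeneous set), which the paper leaves implicit; you package Case 2 as a reduction to Case 1 via complement-invariance of the coset condition instead of verifying $\langle G\setminus S\rangle$ directly (the two subgroups coincide); and you justify the definite article in ``the maximal homogeneous set containing $1$'' by a uniqueness argument via \cref{maximum lemma}.
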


Before proving this theorem, let us acknowledge that a very similar result already exists in the thesis of Rachel V. Barber \cite{barber2021recognizing} in the form of Corollary 1 on page 16, stating that every Cayley digraph $\text{Cay}(G,S)$ is a non-trivial wreath product of two vertex-transitive graphs if and only if there exists a non-trivial subgroup $H$ of $G$ such that $G\setminus S$ is a union of double cosets of $H$.

In light of the equivalence between the existence of non-trivial wreath products and the existence of non-trivial homogeneous sets established by Theorem  \ref{homogeneous product equivalence}, the similarity is to be expected. Corollary 1 of Barber requires $S$ to be a union of double cosets, while in our case we work with right cosets. It is important to observe that if $\Gamma$ is undirected, then in fact the two criteria are the same. Indeed, every double coset is a union of right cosets. Now conversely, if we assume that $S$ has the property that if $g \in S\setminus H$, then $Hg\subseteq S$, then since the graph is undirected, $S$ is stable under taking inverses. Thus we start by taking $g \in S \setminus H$ and then note that $g^{-1}H \subseteq S$ and so by taking inverses we get that $gH \subseteq S$ and so $HgH \subseteq S$. We will later see how the point of view of homogeneous sets can be used in the case of the directed graphs as well.

Now let us prove the theorem.
\begin{proof}
  First, suppose that $\text{Cay}(G,S)$ is not connected. Set $H=\left < S \right >$, the subgroup of $G$ generated by $S$.
  Then using our hypothesis that $S \neq \emptyset$, we see that, the induced subgraph $\text{Cay}(G,S)[H]$ of $\text{Cay(G,S)}$, is the connected component of $Cay(G,S)$ containing $1$. Since $S\subseteq H$, we see that $S\setminus H = \emptyset$. Therefore $S\setminus H$ is an empty union of right cosets. Finally we see that since $\text{Cay}(G,S)$ is not connected, $H\neq G$.

   Assume now that $\text{Cay}(G,S)$ is not anti-connected. Set $H=\left < G\setminus S \right >$ be the subgroup of $G$ generated by $G\setminus S$. Then the induced subgraph $Cay(G,S)^{*}[H]$ of complement $Cay(G,S)^{*}$ of $Cay(G,S)$ is the connected component of $Cay(G,S)^{*}$ containing $1$. Since by assumption, $Cay(G,S)$ is not a complete graph, $H \neq \{1\}$. By assumption, $Cay(G,S)^{*}$ is not connected and thus $H\neq G$. From our definition of $H$, we see that $G\setminus S \subseteq H$. For each $g \in S \setminus H$, we have $Hg \cap H = \emptyset$ and therefore $Hg\subseteq S$.

  Finally assume that $\text{Cay}(G,S)$ is both connected and anti-connected. We take $H$ to be the maximal homogeneous set containing $1$. Since $G$ acts transitively on $\text{Cay}(G,S)$ and it is both connected and anti-connected, by lemma \ref{homogeneous product equivalence}, we see that if $gH \cap H \neq \emptyset$, then $gH=H$. This property implies that $H$ is a subgroup. Indeed consider $g,h \in H$. Then $1 \in g^{-1}H \cap H$ and we see that $g^{-1}H=H$. Thus $g^{-1}h \in H$ proving that $H$ is indeed a subgroup. Take $s \in S\setminus H$. Let us show that $Hs \subseteq S$. We have that $s$ is adjacent to $1$, but is not in $H$. Since $H$ is homogeneous, this implies that $s$ is adjacent to every element in $H$ and therefore $\forall h \in H$, $h^{-1}s \in S$, which implies that $Hs \subseteq S$, as expected.
\end{proof}

While the focus of this paper is on undirected graphs, we will show in the section that follows how the technique of homogeneous sets can be used for directed graphs as well.

Finally, we can make the following observation concerning the homogeneous sets in Cayley graphs:

\begin{thm}
 Let ${\rm Cay}(G,S)$ be a Cayley graph and let $H \subseteq G$ be a homogeneous set of $G$ containing $1$. Then the subgroup  $\langle H \rangle$ generated by $H$ is also a homogeneous set.
\end{thm}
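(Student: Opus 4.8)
The plan is to show that if $H$ is a homogeneous set containing $1$, then $\langle H \rangle$ is homogeneous. The key structural fact I would exploit is the following: for any element $g \in G$, left-translation by $g$ is an automorphism of $\mathrm{Cay}(G,S)$, so $gH$ is homogeneous whenever $H$ is. Moreover, if $g \in H$, then $g \in gH \cap H$ (since $1 \in H$ implies $g = g \cdot 1 \in gH$, and $g \in H$ by assumption), so $gH$ and $H$ are non-disjoint homogeneous sets. By \cref{maximum lemma}, $gH \cup H$ is then homogeneous.

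Building on this, first I would establish that $H \cup H \cdot H$ type unions stay homogeneous. More carefully, the natural approach is to write $\langle H \rangle$ as an increasing union of homogeneous sets. Define $H_1 = H$ and inductively $H_{n+1} = \bigcup_{g \in H} g H_n$. I would prove by induction that each $H_n$ is homogeneous and contains $1$: given that $H_n$ is homogeneous and contains $1$, each $gH_n$ for $g \in H$ is homogeneous (as a translate) and contains $g$, hence meets $H \subseteq H_n$; a repeated application of \cref{maximum lemma} shows the finite union $\bigcup_{g \in H} gH_n$ is homogeneous, and it clearly contains $1$ since $1 \in H \subseteq H_n$. Because $G$ is finite, the chain $H_1 \subseteq H_2 \subseteq \cdots$ stabilizes, and the stable value is exactly the set of all finite products of elements of $H$. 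Since $1 \in H$ and $H$ is symmetric is not assumed, I must be slightly careful: $\langle H \rangle$ consists of products of elements of $H$ and their inverses.

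To handle inverses cleanly, I would argue that $\langle H \rangle$ equals the set of all finite products of elements of $H$ without needing inverses, because $G$ is finite: for any $g \in H$, some power $g^k = 1$ so $g^{-1} = g^{k-1}$ is already a product of elements of $H$. Thus the semigroup generated by $H$ coincides with $\langle H \rangle$, and this is precisely the stable value of the chain $\{H_n\}$ described above. Combining the two observations, $\langle H \rangle = H_N$ for $N$ large, and $H_N$ is homogeneous by the induction, which completes the proof.

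The main obstacle I anticipate is verifying that translates of homogeneous sets are homogeneous, i.e.\ that each $gH$ for $g \in G$ is homogeneous. This rests on the fact that left-translation $x \mapsto gx$ is a graph automorphism of $\mathrm{Cay}(G,S)$: indeed $(gx)^{-1}(gy) = x^{-1}y \in S$ iff $xy$ is an edge, so adjacency is preserved, and automorphisms send homogeneous sets to homogeneous sets. Once this translation-invariance is in hand, the rest is a routine induction using \cref{maximum lemma} together with the finiteness of $G$ to guarantee the chain stabilizes at $\langle H\rangle$. The only subtlety requiring attention is ensuring the union defining $H_{n+1}$ is genuinely homogeneous: since \cref{maximum lemma} is stated for two sets, I would apply it iteratively, noting that at each stage the accumulated union still contains $1$ (hence meets every new translate $gH_n$, which contains $g \in H$ and thus meets $H$), keeping all pairwise intersections non-empty.
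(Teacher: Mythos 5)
Your proposal is correct and takes essentially the same approach as the paper: both proofs rest on left-translates of homogeneous sets being homogeneous, on \cref{maximum lemma} applied to translates that overlap through an element of $H$, on induction over the length of products, and on the finiteness of $G$ to realize inverses as positive powers so that the semigroup generated by $H$ equals $\langle H \rangle$. The only cosmetic difference is that you build an increasing chain $H_{n}$ of homogeneous sets that stabilizes at $\langle H \rangle$, whereas the paper picks a maximal homogeneous set $M$ with $H \subseteq M \subseteq \langle H \rangle$ and shows $M = \langle H \rangle$ --- the same closure argument in different packaging.
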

\begin{proof}

  Consider the maximal homogeneous set $M$ of $\Gamma$ such that $M$ contains $H$ and $M$ is contained in $\langle H \rangle $. Let us prove that $M=\langle H \rangle$. We can prove by induction on $n$ that $\forall g_{1}, \cdots, g_{n} \in H$, $g_{1} \cdots g_{n} \in M$. To start the induction, observe that if $n=0$ we have an empty product equal to $1$ and by assumption $1 \in H \subseteq M$. Now suppose that the statement is true for some $n \in \mathbb{N}$. Consider the product $g_{1} \cdots g_{n+1}$. Now $g_{1}M$ is homogeneous and $g_{1} \in M \cap g_{1}M$, since $M$ contains $1$ and so by \cref{maximum lemma}, it follows that $M \cup g_{1}M$ is a homogeneous set containing $H$ and contained in $\langle H \rangle$ and thus by maximality of $M$, $g_{1}M \cup M \subseteq M$. By induction we have that $g_{2} \cdots g_{n+1} \in M$ and so $g_{1} \cdots g_{n+1} \in M$. Since this statement is true for every $n \in \mathbb{N}$, $M$ contains all the product of the elements of $H$ and since $G$ is a finite group, this implies that $\langle H \rangle \subseteq M$, proving that $\langle H \rangle=M$ is homogeneous.
\end{proof}

\subsection{Cayley digraphs and the wreath product} \leavevmode

First note that we will work on simple digraphs. We can define a simple digraph as a set of vertices $V(\Gamma )$ together with an anti-reflexive relation $E(\Gamma )$. We say that $xy$ is an edge if $(x,y) \in E(\Gamma)$.
Now for a directed graph $\Gamma$, we define the underlying undirected graph of $\Gamma$, denoted $S(\Gamma )$, to be the undirected graph with vertices $V(\Gamma )$ and edges \[\{(x,y) \in V(\Gamma )^{2}| \ xy \text{ is an edge in } \Gamma \text{ or } yx \text{ is an edge in } \Gamma\}\]

We say that $\Gamma$ is \emph{connected} if $S(\Gamma )$ is connected, and that $\Gamma$ is \emph{anti-connected} if $S(\Gamma^{*})$ is connected, with $\Gamma^{*}$ being the \emph{complement (di)graph} of $\Gamma$ defined as a graph vertex with the vertex set $V(\Gamma)$ and edges of the form $\{(x, y)| x \neq y \text{ and } xy \text{ is not an edge in } \Gamma\}$.
For a digraph $\Gamma$ we say it is \emph{complete} (resp.\ \emph{cocomplete}) if $E(\Gamma^*)= \emptyset$ (resp.\ $E(\Gamma)= \emptyset$).

For a directed graph $\Gamma$ and a subset $H \subseteq V(\Gamma )$, we say that $H$ is \emph{in-homogeneous} if, for every $v \in V(\Gamma) \setminus H$, either $vh$ is an edge of $\Gamma$ for all $h \in H$, or $vh$ is not an edge of $\Gamma$ for all $h$ in $G$. 

Similarly, we say that $H \subseteq V(\Gamma )$ is \emph{out-homogeneous} if, for every $v \in V(\Gamma) \setminus H$, either $hv$ is an edge of $\Gamma$ for all $h \in H$, or $hv$ is not an edge of $\Gamma$ for all $h$ in $G$. 

Finally, $H$ is \emph{bihomogeneous} if it is both in- and out-homogeneous. As in the undirected case, we say that a bihomogeneous set is \emph{non-trivial} if it contains at least two vertices, and does not contain all vertices of $\Gamma$. Likewise, the notions of \emph{vertex-transitive} and \emph{(non-trivial) wreath product} are defined in analogy with the undirected case.
Note that just like for the undirected case, we have the following lemma:

\begin{lem} \label{maximum in-lemma}
  Let $\Gamma$ be a directed graph and $H,H'$ two bihomogeneous sets with non-empty intersections. Then $H \cup H'$ is bihomogeneous.
\end{lem}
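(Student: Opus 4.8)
The plan is to mirror exactly the proof of the undirected \cref{maximum lemma}, but to run the adjacency-propagation argument separately in each direction, since ``bihomogeneous'' is by definition the conjunction of in-homogeneity and out-homogeneity. So it suffices to prove that $H \cup H'$ is in-homogeneous and, by the symmetric argument, that it is out-homogeneous.

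First I would fix a point $x \in H \cap H'$, which exists by the non-empty intersection hypothesis, and take an arbitrary vertex $y \notin H \cup H'$. To verify in-homogeneity I must show that $y$ either points into every element of $H \cup H'$ or into none. Suppose $yz$ is an edge for some $z \in H \cup H'$; by the symmetry between $H$ and $H'$ I may assume $z \in H$. Since $H$ is in-homogeneous and $y \notin H$, the edge $yz$ forces $yh$ to be an edge for all $h \in H$; in particular $yx$ is an edge. Now $x \in H'$ and $y \notin H'$, so in-homogeneity of $H'$ upgrades this to $yh'$ being an edge for all $h' \in H'$. Hence $y$ points into every vertex of $H \cup H'$, establishing that the ``some edge'' case propagates to ``all edges.'' The contrapositive gives the ``none'' case, so $H \cup H'$ is in-homogeneous.

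The out-homogeneous case is literally the same argument with the roles of source and target reversed: assume $zy$ is an edge for some $z \in H \cup H'$, reduce to $z \in H$ by symmetry, use out-homogeneity of $H$ to get $hy$ an edge for all $h \in H$ (so $xy$ is an edge), then use out-homogeneity of $H'$ through the shared point $x$ to conclude $h'y$ is an edge for all $h' \in H'$. Combining the two directions, every $y \notin H \cup H'$ is uniformly adjacent (in each orientation) to all or none of $H \cup H'$, which is precisely the statement that $H \cup H'$ is bihomogeneous.

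I do not anticipate a genuine obstacle here: the only subtlety worth flagging is that the shared vertex $x$ must lie in \emph{both} $H$ and $H'$ for the hand-off between the two homogeneity conditions to work, which is guaranteed by $H \cap H' \neq \emptyset$, and that one genuinely needs both in- and out-homogeneity as separate hypotheses (an in-homogeneous-only version of the lemma would not yield an out-homogeneous union). The proof is thus a direct directed-graph transcription of the undirected case, carried out once per orientation.
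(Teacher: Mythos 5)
Your proof is correct and is exactly what the paper intends: the paper states \cref{maximum in-lemma} without proof, remarking only that it holds ``just like for the undirected case,'' and your argument is precisely the proof of \cref{maximum lemma} transcribed once per orientation, with the shared vertex $x \in H \cap H'$ handing the adjacency condition from $H$ to $H'$. No gaps; your flagged subtleties (needing $x$ in both sets, and needing both homogeneity directions as separate hypotheses) are accurate.
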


Now we will prove the following theorem: 

\begin{thm}
   If $\Gamma$ is a vertex-transitive directed graph such that $\Gamma$ is neither complete nor cocomplete, then $\Gamma$ is a non-trivial wreath product if and only if $\Gamma$ has a non-trivial bihomogeneous set.
\end{thm}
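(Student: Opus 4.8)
The plan is to mirror the proof of \cref{homogeneous product equivalence}, replacing homogeneous sets with bihomogeneous sets and tracking edge orientations carefully throughout. The forward direction is immediate: if $\Gamma \cong A \cdot B$ is a non-trivial wreath product, then for any $x \in V(A)$ the fibre $\{x\} \times V(B)$ is bihomogeneous, since whether an outside vertex $(x',y')$ has an edge to (resp.\ from) the whole fibre depends only on whether $x'x$ (resp.\ $xx'$) is an edge of $A$, independently of the $B$-coordinate; it is non-trivial because both factors have at least two vertices.

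For the converse I would split into the same three cases as in the undirected proof. If $\Gamma$ is not connected, i.e.\ $S(\Gamma)$ is disconnected, then since $\Gamma$ is not cocomplete some connected component $C$ of $S(\Gamma)$ has more than one vertex; using transitivity to carry $C$ onto the other components by group elements $g_1, \dots, g_n$, one checks $\Gamma \cong A \cdot \Gamma[C]$, where $A$ is the edgeless digraph on $\{g_1,\dots,g_n\}$ (there are no edges between distinct components in either direction). If $\Gamma$ is not anti-connected, I pass to the complement digraph $\Gamma^*$; since $(\Gamma \cdot \Delta)^* = \Gamma^* \cdot \Delta^*$ holds for digraphs just as for graphs (verified separately in the cases $x=x'$ and $x \neq x'$), applying the previous case to $\Gamma^*$ and complementing back exhibits $\Gamma$ as a non-trivial wreath product.

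The substantive case is when $\Gamma$ is both connected and anti-connected. Here I take $H$ to be a maximal non-trivial bihomogeneous set containing a distinguished vertex $x_0$, and aim to show that for every $g \in G$ either $gH \cap H = \emptyset$ or $gH = H$. By \cref{maximum in-lemma}, if $gH \cap H \neq \emptyset$ then $gH \cup H$ is bihomogeneous and contains $H$, so maximality forces $gH \cup H \in \{H, V(\Gamma)\}$, and the whole difficulty is ruling out $gH \cup H = V(\Gamma)$. As before this reduces to the bound $2|H| \leq |V(\Gamma)|$, but now in- and out-degrees must be handled separately. I would first record that in a vertex-transitive digraph every vertex has equal out-degree and in-degree $d$ (writing $\deg^+$ for out-degree and $\deg^-$ for in-degree): each automorphism preserves both degrees and the orbit is everything, while $\sum_v \deg^+(v) = |E(\Gamma)| = \sum_v \deg^-(v)$ forces $\deg^+ = \deg^-$. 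Connectivity of $S(\Gamma)$ produces a vertex $y \notin H$ joined to $H$ by an edge; if that edge points into $H$, in-homogeneity forces $y$ to send edges to all of $H$, giving $\deg^+(y) \geq |H|$, and if it points out of $H$, out-homogeneity gives $\deg^-(y) \geq |H|$, so either way $d \geq |H|$. Dually, anti-connectivity yields $y' \notin H$ with a non-edge to or from $H$, and the all-or-nothing property forces $y'$ to miss all of $H$ on the relevant side, giving $d \leq |V(\Gamma)| - |H| - 1$. Combining, $2|H| \leq |V(\Gamma)|$, whence $|gH \cup H| = 2|H| - |gH \cap H| < |V(\Gamma)|$ and $gH = H$.

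This orientation bookkeeping is the main obstacle: one must invoke the correct one of in-/out-homogeneity for each edge direction, and the step $\deg^+ = \deg^-$ is what lets the lower and upper bounds combine. Once it is settled, the $G$-translates of $H$ partition $V(\Gamma)$ and I finish as in the undirected case: choose coset representatives $g_1, \dots, g_n$, define the digraph $A$ on them with $g_ig_j$ an edge iff $g_ix_0 \to g_jx_0$ in $\Gamma$, and verify that $\Phi(g_i,h) = g_i\cdot h$ is an isomorphism $A \cdot \Gamma[H] \to \Gamma$. That $\Phi$ is a bijection is the same coset argument as before; that $\Phi$ and $\Phi^{-1}$ both preserve edges uses the two halves of bihomogeneity in tandem, since to propagate an edge $g_ix_0 \to g_jx_0$ to $g_ih \to g_jh'$ I spread it first across $g_jH$ by in-homogeneity and then across $g_iH$ by out-homogeneity, reversing this chain for $\Phi^{-1}$. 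Finally $A$ and $\Gamma[H]$ each have at least two vertices, so the product is non-trivial.
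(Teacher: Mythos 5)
Your proof is correct and follows essentially the same three-case strategy as the paper's: the forward direction via fibres, the degree bound $|H| \leq d \leq |V(\Gamma)| - |H|$ using constant and equal in- and out-degrees, the translate-partition of a maximal bihomogeneous set via \cref{maximum in-lemma}, and the isomorphism $\Phi(g_i, h) = g_i \cdot h$ all match the paper's argument. The one local divergence is the non-anti-connected case: the paper argues directly inside $\Gamma$, showing that $G$-translates of a component $C$ of $S(\Gamma^*)$ are again components of $S(\Gamma^*)$ and that between distinct such components every edge of $\Gamma$ is present in both directions, so $\Gamma \cong A \cdot \Gamma[C]$ with $A$ complete; you instead pass to $\Gamma^*$, apply the disconnected case, and complement back via $(\Gamma \cdot \Delta)^* = \Gamma^* \cdot \Delta^*$, which does hold for digraphs (as you verify), provided you also note that the same group acts transitively on $\Gamma^*$ --- automatic for a finite graph, since a bijection of $V(\Gamma)$ carrying edges to edges also carries non-edges to non-edges, but worth stating; the paper's undirected Case 2 relies on the same unstated point. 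Your route is slightly shorter, while the paper's avoids invoking the complement identity at the cost of checking the cross-edge structure by hand. Incidentally, your treatment of the degree argument is more careful than the paper's: you justify $\deg^+ = \deg^-$ by summing over edges, and your bound $d \leq |V(\Gamma)| - |H| - 1$ is stated correctly, whereas the paper asserts the equality of in- and out-degrees without proof and labels what is really the out-degree bound on $x'$ as an in-degree bound.
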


\begin{proof}
  For the ``if'' case, we note that if $\Gamma$ is isomorphic to $\Delta \cdot \Delta '$, then $\Delta [\{x\}] \cdot \Delta '$ is a bihomogeneous set.

  Now, we take a group $G$ acting on $\Gamma$, with transitive action on $V(\Gamma )$.
  To prove the ``only if'' case we distinguish several cases:

  In the first case, assume that $\Gamma$ is not connected. Then, as in the directed case of \cref{homogeneous product equivalence}, we take the connected components $g_{1}C , \cdots ,g_{n}C$  and we use that there are no edges between them (regardless of direction). Again, $\Gamma$ is isomorphic to $A \cdot \Gamma [C]$ where  $A$ is the graph with vertex set $\{g_{1} , \cdots ,g_{n}\}$ and no edges. Note that $C$ contains more than $1$ element since $\Gamma ^{*}$ is not complete and thus $\Gamma$ has at least one edge.

  In the second case, we assume that $\Gamma$ is not anti-connected. Take then $C$ a connected component in $S(\Gamma^{*} )$. Note that for every $g\in G$, either $g \cdot C = C$ or $g\cdot C \cap C =\emptyset$. Moreover, $g \cdot C$ is a connected component in $S(\Gamma^{*} )$. To prove this, we first show that if we take $x,y \in g\cdot C$, then there exists a path in $S(\Gamma^{*} )$ from $x$ to $y$. Note that both $g^{-1} \cdot x $ and $g^{-1} \cdot y$ are in $C$ and so there exist $x_{1} , \cdots ,x_{n}$, such that $x_{1}=x,x_{n}=y$ and $x_{i}x_{i+1} \in E(S(\Gamma^{*}))$. Now take any $i$ between $1$ and $n-1$. We have that $x_{i}x_{i+1} \in E(S(\Gamma^{*} ))$ and  so $x_{i}x_{i+1} \notin E(\Gamma )$ or $x_{i+1}x_{i}\notin E(\Gamma)$. As such, $[g\cdot x_{i}][g\cdot x_{i+1}] \in E(\Gamma^{*})$ or $[g\cdot x_{i+1}][g\cdot x_{i}] \in E(\Gamma^{*})$ and either way, $[g\cdot x_{i}][g\cdot x_{i+1}] \in S(E(\Gamma^{*}))$, proving that $g\cdot C$ is connected in $S(\Gamma^{*})$, since $g\cdot x_{1} ,\cdots g\cdot x_{n}$ is a path from $x$ to $y$ in $S(\Gamma^{*})$. Now assume that $y$ is adjacent to $x \in g\cdot C$ in $S(\Gamma ^{*})$. Let us show that $y \in g\cdot C$. Since $y$ is adjacent to some $x$, it follows that $xy \in E(S(\Gamma^{*}) )$, so $xy \in E(\Gamma^{*})$ or $yx \in E(\Gamma^{*})$. As such, $[g^{-1}\cdot x][g^{-1}\cdot y] \in E(S(\Gamma^{*} ))$ and since $C$ is a connected component of $S(\Gamma^{*})$ and $x \in C$, we have that $g^{-1} \cdot y \in C$ and therefore $y \in g\cdot C$.

  Now we shall prove that if $x \in C$ and $y \notin C$, then $xy \in E(\Gamma )$ and $yx \in E(\Gamma )$. Indeed, if, say, $xy \notin E(\Gamma )$, then $xy \in E(\Gamma^{*}) \subseteq E(S(\Gamma^{*}))$ contradicting that $C$ is a connected component. By the same reasoning, we have that $yx \in E(\Gamma )$. Finally, if we pick $g_{1} ,\cdots ,g_{n} \in G$ such that $g_{1} C, \cdots ,g_{n}C$ form a partition on $V(\Gamma )$, then if $A$ denotes the complete graph on $\{g_{1}, \cdots ,g_{n}\}$, then $\Gamma$ is isomorphic to the product $A\cdot \Gamma [C]$.

  Finally, we assume $\Gamma$ is both connected and anti-connected. We take $x_{0} \in V(\Gamma )$ and let $X$ be a non-trivial bihomogeneous set containing $x_{0}$. Choose a maximal bihomogeneous set  $H$ containing $X$; let us show that for every $g\in G$, either $g\cdot H =H$ or $g\cdot H \cap H=\emptyset$. Assume that $g\cdot H \cap H \neq \emptyset$. We will then show that $g\cdot H =H$. To prove this, just like in the undirected case of \ref{homogeneous product equivalence}, we will show that $|H| \leq \frac{|G|}{2}$. Since $\Gamma$ is connected, there exists $x \notin H$ and $h_{0} \in H$ such that either $xh_{0}$ is an edge or $h_{0}x$ is an edge. The two cases are symmetric, so we will assume that $xh_{0}$ is an edge. Since $H$ is in-homogeneous, we get that for every $h \in H$, $xh$ is an edge, and thus the out-degree of $x$ is at least $|H|$. Note that since the graph is, by assumption, vertex-transitive, we have that the in-degree and out-degrees are constant, equal to each other, and equal at every vertex.

Now, since $\Gamma$ is anti-connected, it follows that there exists $x' \notin H$, such that $x'$ is adjacent to some $h_{1} \in H$ in $S(\Gamma^{*})$. This means that either $h_{1}x'$ or $x'h_{1}$ is not an edge in $\Gamma$. We assume without loss of generality that $x'h_{1}$ is not an edge. Then, for every $h \in H$, $x'h$ is not an edge and therefore the in-degree of $x'$ is at most $|V(\Gamma )|-|H|$. Thus we conclude that $|H|\leq \frac{|V(\Gamma )|}{2}$. The rest of the proof follows the same way as that of \ref{homogeneous product equivalence}.
\end{proof}

Using the same techniques, we can establish the following result:

\begin{thm}
  Let $G$ be a group and $S \subseteq G\setminus \{1\}$, such that $S \cup S^{-1}$ is distinct from $G\setminus \{1\}$ and $S\neq \emptyset$. Then $\text{Cay}(G,S)$ has a non-trivial bihomogeneous set if and only if there exists a subgroup $H$ of $G$ such that for every $g \in S \setminus H$, $HgH \subseteq S$.
\end{thm}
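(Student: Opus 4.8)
The plan is to follow the proof of \cref{thm:subgroup_homogeneous} almost verbatim, systematically replacing homogeneous sets by bihomogeneous sets and right cosets by double cosets, and using left translation by $G$ as the transitive action. As in \cref{thm:subgroup_homogeneous}, the subgroup $H$ is to be taken non-trivial ($2 \le |H| < |G|$); otherwise $H = \{1\}$ satisfies the coset condition vacuously and the statement would be empty.

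For the ``if'' direction, suppose $H$ is a non-trivial subgroup with $HgH \subseteq S$ for all $g \in S \setminus H$, and let me verify that $H$ is itself a non-trivial bihomogeneous set. Fix $v \notin H$. If the right coset $Hv$ meets $S$, say $h_0 v \in S$ for some $h_0 \in H$, then $h_0 v \in S \setminus H$ and hence $Hv = H(h_0 v) \subseteq H(h_0 v)H \subseteq S$; thus $Hv$ is either contained in or disjoint from $S$, which is exactly out-homogeneity. Symmetrically, if the left coset $v^{-1}H$ meets $S$, say $v^{-1}h_0 \in S$, then $v^{-1}H = (v^{-1}h_0)H \subseteq H(v^{-1}h_0)H \subseteq S$, giving in-homogeneity. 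So $H$ is bihomogeneous and non-trivial.

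For the converse I would split into the same three cases as in \cref{thm:subgroup_homogeneous}. If $\mathrm{Cay}(G,S)$ is disconnected, take $H = \langle S \rangle$: then $S \subseteq H$, so $S \setminus H = \emptyset$ and the condition holds vacuously, while $H$ is non-trivial since $S \neq \emptyset$ (so $H \neq \{1\}$) and the graph is disconnected (so $H \neq G$). If $\mathrm{Cay}(G,S)$ is not anti-connected, I would pass to the complement digraph, whose connection set is $(G \setminus \{1\}) \setminus S$, and set $H$ to be the subgroup it generates; then $G \setminus S \subseteq H$, so for any $g \in S \setminus H$ we have $HgH \cap H = \emptyset$ and therefore $HgH \subseteq G \setminus H \subseteq S$. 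Here the hypothesis $S \cup S^{-1} \neq G \setminus \{1\}$ is exactly what guarantees $(G\setminus\{1\})\setminus S \neq \emptyset$, so that $H$ is non-trivial. In the final case, where the graph is both connected and anti-connected, I would first translate the given non-trivial bihomogeneous set by left multiplication so that it contains $1$, then take $H$ to be a maximal non-trivial bihomogeneous set containing $1$; the directed analogue of \cref{homogeneous product equivalence} (the preceding theorem) shows $gH = H$ or $gH \cap H = \emptyset$ for every $g \in G$, and from this $H$ is seen to be a subgroup exactly as in \cref{thm:subgroup_homogeneous} (for $g \in H$ one has $1 \in g^{-1}H \cap H$, so $g^{-1}H = H$).

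The one genuinely new point, and the step I expect to be the main obstacle, is producing the double-coset inclusion in this last case. Applying out-homogeneity to the edge from $1$ to $g$ (which exists because $g \in S$) yields only the right coset inclusion $Hg \subseteq S$, just as a single homogeneity did in \cref{thm:subgroup_homogeneous}; in-homogeneity likewise yields only inclusions of left cosets. The bridge is the observation that a subset of $G \setminus H$ that is at once a union of right cosets and a union of left cosets of $H$ must be a union of double cosets: writing $HgH = \bigcup_{h \in H}(hg)H$, each element $hg$ lies in $Hg \subseteq S \setminus H$, so in-homogeneity forces the whole left coset $(hg)H$ into $S$, whence $HgH \subseteq S$. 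Combining the two one-sided homogeneity conditions in this way is precisely what upgrades the right-coset criterion of the undirected setting to the double-coset criterion here; the connectivity bookkeeping and the subgroup verification are otherwise routine transcriptions of the undirected argument.
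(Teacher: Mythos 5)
Your proposal is correct and takes precisely the route the paper intends: the paper states this theorem without proof (``using the same techniques''), and your write-up is the faithful directed transcription of the arguments for \cref{homogeneous product equivalence} and \cref{thm:subgroup_homogeneous}, with the one genuinely new step---out-homogeneity at the vertex $g$ (via the edge $(1,g)$) giving $Hg \subseteq S$, then in-homogeneity at each $hg \in S \setminus H$ upgrading this to $(hg)H \subseteq S$ and hence $HgH \subseteq S$---carried out correctly, and with the connectivity cases and the subgroup verification handled exactly as in the undirected setting. You are also right that $H$ must be read as a \emph{non-trivial} subgroup (as it is in \cref{thm:subgroup_homogeneous}), since $H=\{1\}$ satisfies the double-coset condition vacuously and would render the stated equivalence false.
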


\section{Homogeneous sets in Cayley graphs of a ring} 
\label{sec:ring_cayley}

Let $R$ be a commutative ring, and let $S$ be a subgroup of the set $R^{\times}$ of units of $R$. Throughout this section, we write $\text{Cay}(R, S)$ for the Cayley graph $\text{Cay}((R,+), S)$. In this section, we delve into the question of determining when $\text{Cay}(R,S)$ is a prime graph.  We will focus on undirected graphs and hence we will assume that $-1 \in S$ for the rest of this section (since $S$ is a subgroup of $R^{\times}$, this is equivalent to the condition that $S=-S$). We remark that this type of Cayley graph is particularly interesting because it captures the interplay between the additive and multiplicative structures of $R$. Here ideals of rings play a key role. We call an ideal $I$ of $R$ non-trivial if $I \neq 0, R.$ Special cases of these Cayley graphs have been extensively studied in the literature. For instance, when $S = R^{\times}$, the resulting graph $\text{Cay}(R, R^{\times})$ is called a unitary Cayley graph (see \cite{bavsic2015polynomials, klotz2007some}). Additionally, when $R$ is the ring of integers modulo a number $N$ and $S$ is associated with a Dirichlet character, the resulting graph is a generalization of the classical Paley graphs (see \cite{cramer2016isoperimetric, paley1933orthogonal, paleygraph}). It is worth noting that due to their arithmetic nature, these Paley graphs exhibit intriguing properties and have found applications in diverse fields such as coding and cryptography theory (see \cite{ghinelli2011codes, javelle2014cryptographie}).

\subsection{Some general properties of $\text{Cay}(R,S)$}

In this section, we prove some foundational results about the primality of $\text{Cay}(R,S)$. We start our investigation with the following theorem.

\begin{thm} \label{thm:prime_implies_ideals}
    Let $R$ be a commutative ring and let $S \subseteq R^{\times}$ be a subgroup. Suppose that ${\rm Cay}(R, S)$ is connected and anti-connected. If ${\rm Cay}(R,S)$ is not prime, then there exists a non-trivial ideal $I$ such that $I$ is a homogeneous set.
\end{thm}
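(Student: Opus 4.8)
The plan is to show that the maximal homogeneous set $H$ containing $0$ is itself the desired ideal. First I would invoke \cref{thm:subgroup_homogeneous}: since $\text{Cay}(R,S)$ is connected, anti-connected, and not prime, there is a maximal homogeneous set $H$ containing the additive identity $0$, and this $H$ is a subgroup of $(R,+)$. Being a non-trivial homogeneous set it satisfies $2 \le |H| < |R|$, so $H \neq 0$ and $H \neq R$; it therefore only remains to upgrade this additive subgroup to an ideal.

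The key observation is that each $s \in S$ acts on $R$ by multiplication $\mu_s \colon x \mapsto sx$, and because $s$ is a unit and $S$ is a multiplicative group with $-S = S$, the map $\mu_s$ is a graph automorphism of $\text{Cay}(R,S)$: indeed $x - y \in S$ if and only if $s(x-y) \in S$, since multiplication by $s$ permutes $S$. As automorphisms carry homogeneous sets to homogeneous sets and $\mu_s$ fixes $0$, the image $sH$ is again a homogeneous set containing $0$. By \cref{maximum lemma} the union $H \cup sH$ is homogeneous and contains $0$, so maximality of $H$ forces $H \cup sH = H$, that is $sH \subseteq H$; since $\mu_s$ is a bijection this yields $sH = H$ for every $s \in S$.

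To pass from invariance under $S$ to invariance under all of $R$, I would use connectedness. Connectedness of $\text{Cay}(R,S)$ means $S$ generates $(R,+)$, and since $-S = S$ every element $r \in R$ can be written as a finite sum $r = s_1 + \cdots + s_k$ with each $s_i \in S$. Then for any $h \in H$ we have $rh = \sum_{i=1}^k s_i h$, and each $s_i h \in s_i H = H$, so $rh$ lies in the additive subgroup $H$. Hence $rH \subseteq H$ for all $r \in R$, and $H$ is a non-trivial ideal that is a homogeneous set, as required.

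I expect the only delicate point to be the automorphism–maximality step establishing $sH = H$, where one must notice that multiplication by a unit is a graph automorphism and combine this with \cref{maximum lemma}. Once this $S$-invariance is in hand, connectedness does the real work of promoting it to full ideal invariance, and the remainder is routine. It is worth noting that this argument does more than assert existence: it identifies an explicit ideal, namely the maximal homogeneous set through $0$.
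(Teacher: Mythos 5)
Your overall strategy is exactly the paper's: take the maximal non-trivial homogeneous set $H$ containing $0$ (a subgroup of $(R,+)$ by \cref{thm:subgroup_homogeneous}), show $sH = H$ for each $s \in S$ via the automorphism $x \mapsto sx$ together with \cref{maximum lemma}, then use connectedness to write an arbitrary $r \in R$ as an integer combination of elements of $S$ and conclude $rH \subseteq H$. However, there is a genuine gap at precisely the step you flagged as delicate: from ``$H \cup sH$ is a homogeneous set containing $0$'' you conclude that ``maximality of $H$ forces $H \cup sH = H$.'' That does not follow as stated, because $H$ is maximal only among \emph{non-trivial} homogeneous sets, while the full vertex set $R$ is itself a (trivial) homogeneous set. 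Maximality therefore only yields the dichotomy $H \cup sH = H$ or $H \cup sH = R$, and the second alternative must be excluded. This is not a formality: in a vertex-transitive graph that fails one of the hypotheses, a maximal proper homogeneous set can occupy more than half the vertices (e.g.\ in the disjoint union of three edges, which is ${\rm Cay}(\Z/6,\{3\})$, the union of two components is homogeneous of size $4$ out of $6$, and the union of it with a translate is everything), so the case $H \cup sH = R$ cannot be dismissed without an argument.

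The paper closes exactly this case with a counting argument. Since ${\rm Cay}(R,S)$ is connected, anti-connected, and vertex-transitive, the argument in Case 3 of the proof of \cref{homogeneous product equivalence} gives $|H| \leq \frac{|R|}{2}$ (connectedness produces a vertex outside $H$ adjacent to all of $H$, anti-connectedness produces one non-adjacent to all of $H$, and equality of valences forces $|H| \leq d \leq |R| - |H|$). Then, since $0 \in H \cap sH$,
\[ |H \cup sH| = |H| + |sH| - |H \cap sH| \leq 2\cdot \frac{|R|}{2} - 1 < |R|, \]
so $H \cup sH \neq R$ and hence $sH = H$. Note that this is also precisely where the hypotheses ``connected and anti-connected'' enter this stage of the proof; in your write-up they are invoked only through \cref{thm:subgroup_homogeneous} and the generation step, which signals the omission. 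Everything else in your proposal --- the observation that multiplication by a unit in $S$ is a graph automorphism fixing $0$, the reduction $r = \sum_i m_i s_i$ (your version with $S = -S$ absorbing signs is equivalent to the paper's integer-coefficient version), and the conclusion $rH \subseteq H$ --- coincides with the paper's proof, so the gap is local and fixable by inserting the cardinality bound above.
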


\begin{proof}
    By \cref{thm:subgroup_homogeneous}, if $H$ is a maximal non-trivial homogeneous set of $\text{Cay}(R,S)$ containing $0$, then $H$ is a subgroup of $(R, +).$ We claim that $H$ is an ideal in $R$ as well. First, we observe that if $s \in S \subseteq R^{\times}$, then the multiplication by $s$ is an automorphism of $\text{Cay}(R,S).$ Consequently, $sH$ is also a homogeneous set. Since $0 \in H \cap sH$, we conclude that $H \cup sH$ is also a homogeneous set. By the maximality of $H$, either $sH=H$ or $H \cup sH = R.$ However, the second case cannot happen since
 \[ | H \cup sH| = |H| +|sH| - |H \cap sH| \leq 2 \frac{|R|}{2}-1 <|R|.\]
 We conclude that $sH=H.$ We now claim if $r \in R$, then $rH=H$. In fact, since $\text{Cay}(R,S)$ is connected, we can write
 \[ r = \sum_{i=1}^d m_i s_i,\]
 where $m_i \in \Z$ and $s_i \in S.$ For each $h \in H$, we have 
 \[ rh = \sum_{i=1}^d m_i (s_i h).\]
 Since $s_i h \in H$ and $H$ is a subgroup of $(R, +)$, we conclude that $rh \in H.$ So, we conclude that $H$ is an ideal in $R$. 
\end{proof}

 The following is an immediate corollary of having a homogeneous set which is at the same time an ideal in $R.$ (The fact that the induced subgraph corresponding to $I$ contains no edges follows from the fact that $I$ is an ideal and every element of $S$ is a unit.) For each natural number $n$, we denote by $E_n$ the cocomplete graph on $n$ vertices. 
\begin{cor}
    Let $R$ be a commutative ring and $S \subseteq R^{\times}.$  Let $I$ be a non-trivial ideal in $R$ which is also a homogeneous set in ${\rm Cay}(R,S).$ Let $\Phi\colon R\to R/I$ be the canonical ring map and $n= |I|$. Then
    \[ {\rm Cay}(R,S) \cong {\rm Cay}(R/I, \Phi(S)) \cdot E_n. \]
\end{cor}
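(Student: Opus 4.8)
The plan is to establish the isomorphism $\mathrm{Cay}(R,S) \cong \mathrm{Cay}(R/I, \Phi(S)) \cdot E_n$ directly, by invoking \cref{homogeneous product equivalence} in the form of its constructive proof, adapted to the present algebraic setting. Since $I$ is a non-trivial homogeneous set that is simultaneously a subgroup of $(R,+)$, the cosets of $I$ partition $R$, and these cosets will play the role of the ``blocks'' $g_iH$ appearing in Case 3 of that theorem. First I would verify that the induced subgraph $\mathrm{Cay}(R,S)[I]$ is exactly the cocomplete graph $E_n$: for any two distinct $x,y \in I$, adjacency would require $x - y \in S \subseteq R^{\times}$, but $x - y \in I$ and $I$ is a proper ideal, so $x-y$ is a non-unit, hence $x,y$ are non-adjacent. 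This is precisely the parenthetical remark in the corollary statement, and it identifies the ``inner'' factor $\Delta$ of the wreath product as $E_n$.

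Next I would construct the isomorphism explicitly. Let $\{r_1, \ldots, r_k\}$ be a transversal for the cosets of $I$ in $R$, so that the $r_j + I$ are distinct and cover $R$; note $k = |R|/|I| = |R/I|$. Define $\Phi\colon R \to R/I$ to be the quotient map as in the statement. I would take the map sending the vertex $(\Phi(r_j), h) \in V(\mathrm{Cay}(R/I,\Phi(S))) \times I$ to the element $r_j + h \in R$, mirroring the map $\Phi$ in the proof of \cref{homogeneous product equivalence}. Bijectivity is immediate from the fact that every element of $R$ lies in a unique coset $r_j + I$ and is uniquely $r_j + h$ for $h \in I$. The content is checking that this is a graph isomorphism, which amounts to comparing adjacency in the two graphs.

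For the adjacency check I would argue as follows. Two vertices $r_j + h$ and $r_{j'} + h'$ of $\mathrm{Cay}(R,S)$ are adjacent iff $(r_j + h) - (r_{j'} + h') \in S$. If $j = j'$, this difference is $h - h' \in I$, a non-unit, so (for distinct vertices) they are never adjacent; correspondingly, in the wreath product with inner factor $E_n$, two vertices in the same block are never adjacent, matching the fact that $E_n$ has no edges. If $j \neq j'$, then the difference lies in the coset $(r_j - r_{j'}) + I$, and here the homogeneity of $I$ is the crucial input: since $I$ is homogeneous, whether an outside vertex is adjacent to a given element of a coset $r_j + I$ depends only on the coset, not on the representative; concretely, $(r_j - r_{j'}) + I \subseteq S$ or is disjoint from $S$. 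Thus adjacency between the two blocks is governed solely by whether $\Phi(r_j) - \Phi(r_{j'}) \in \Phi(S)$ in $R/I$, which is exactly the adjacency condition in $\mathrm{Cay}(R/I, \Phi(S))$. This matches the outer-factor adjacency in the wreath product.

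The main obstacle I anticipate is purely bookkeeping around the quotient: I must confirm that $\Phi(S)$ is a well-defined symmetric generating set for the quotient Cayley graph (it contains $\Phi(-1) = -1$ and is closed under inverses since $S$ is a subgroup of $R^\times$ mapping into $(R/I)^\times$), and that the ``coset of $S$ is either inside $S$ or disjoint from $S$'' dichotomy—which is what makes the outer adjacency well-defined—follows cleanly from $I$ being homogeneous together with $I$ being an ideal. In fact this dichotomy is essentially the right-coset condition of \cref{thm:subgroup_homogeneous} specialized to the additive group, so I would cite that theorem to avoid re-deriving it. Beyond this the argument is a direct transcription of Case 3 of \cref{homogeneous product equivalence}, with $H$ replaced by the ideal $I$ and the group operation written additively, so no genuinely new difficulty should arise.
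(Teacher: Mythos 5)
Your proof is correct and is essentially the argument the paper intends: the paper states the corollary as immediate, with only the parenthetical remark that $\mathrm{Cay}(R,S)[I]$ is edgeless because $I$ is a proper ideal and $S$ consists of units, and your write-up simply makes the implicit construction explicit --- cosets of $I$ as the blocks (which partition $R$ automatically since $I$ is an additive subgroup, so no maximality argument from Case~3 of Theorem~\ref{homogeneous product equivalence} is needed), the dichotomy $a+I\subseteq S$ or $(a+I)\cap S=\emptyset$ from homogeneity (equivalently $S+I=S$, as in Proposition~\ref{prop:iff}), and the resulting coset-representative bijection onto $\mathrm{Cay}(R/I,\Phi(S))\cdot E_n$. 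No gaps; the only cosmetic point is that ``closed under inverses'' should be read additively, i.e.\ $-\Phi(S)=\Phi(S)$, which follows from $-1\in S$ as you note.
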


For two regular graphs $G,H$ of degree $r_{G}$ and $r_H$ respectively, the spectrum of the wreath product of $G \cdot H = $ is well-known. In fact, by \cref{rem:wreath_join}, we know that $G \cdot H$ is the joined union $G[H, \ldots, H]$.  By \cite[Theorem 3]{joined_union2} and \cite[Theorem 3.3]{CM1_b},  we know that the spectrum of $G \cdot H$  is the union of 
\[ r_H + |H| \Spec(G), \]
and $|G|$ copies of 
\[ \Spec(H) \setminus \{r_H \}. \]

For the case $\text{Cay}(R,S)= {\rm Cay}(R/I, \Phi(S)) \cdot E_n$, we have $r_H =0, |H| = |I|, |G| = |R/I|.$ Therefore, we see that its spectrum is the union of
\[ |I| \Spec(\text{Cay}(R/I, \Phi(S))), \]
and $|R/I|(|I|-1)$ copies of $0$.  

\begin{cor}
    Let $R$ be a commutative ring and $S \subseteq R^{\times}.$ Suppose that ${\rm Cay}(R,S)$ is connected and anti-connected. If ${\rm Cay}(R,S)$ is not prime, then $0$ is an eigenvalue of ${\rm Cay}(R,S)$ with multiplicity at least $\frac{|R|}{2}.$
\end{cor}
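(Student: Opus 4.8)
The plan is to chain together the two immediately preceding results and then reduce everything to an elementary counting inequality. Since $\text{Cay}(R,S)$ is assumed connected and anti-connected but not prime, \cref{thm:prime_implies_ideals} hands me a non-trivial ideal $I$ of $R$ that is simultaneously a homogeneous set. Feeding this $I$ into the preceding corollary gives the wreath-product decomposition
\[ \text{Cay}(R,S) \cong \text{Cay}(R/I, \Phi(S)) \cdot E_n, \qquad n = |I|. \]

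Next I would invoke the spectral computation recorded just before this corollary. For this particular wreath product the spectrum is the union of $|I|\,\Spec(\text{Cay}(R/I,\Phi(S)))$ together with $|R/I|(|I|-1)$ copies of $0$. In particular, irrespective of whether $0$ also occurs inside the first block, the eigenvalue $0$ appears with multiplicity \emph{at least} $|R/I|(|I|-1)$. This is the only structural input needed.

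It then remains to compare $|R/I|(|I|-1)$ with $\frac{|R|}{2}$. Using that $I$ is an additive subgroup of $(R,+)$, so that $|R| = |I|\cdot|R/I|$, I rewrite
\[ |R/I|(|I|-1) = |R/I|\cdot|I| - |R/I| = |R| - |R/I|. \]
Because $I$ is a non-trivial ideal we have $|I| \geq 2$, hence $|R/I| = |R|/|I| \leq |R|/2$, and therefore $|R| - |R/I| \geq |R| - |R|/2 = \frac{|R|}{2}$. Combining this with the multiplicity bound from the previous paragraph yields that $0$ is an eigenvalue of multiplicity at least $\frac{|R|}{2}$, as claimed.

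I do not anticipate a genuine obstacle here, since the work has already been done in \cref{thm:prime_implies_ideals} and the wreath-product spectrum formula; the corollary is essentially a bookkeeping consequence. The only point deserving care is to note that the stated count $|R/I|(|I|-1)$ is a lower bound for the multiplicity of $0$ (the first spectral block $|I|\,\Spec(\text{Cay}(R/I,\Phi(S)))$ may contribute additional copies of $0$, which only helps), so that the inequality $|I| \geq 2$ coming from the non-triviality of $I$ is exactly what powers the final estimate.
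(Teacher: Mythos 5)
Your proof is correct and follows exactly the paper's route: the paper likewise obtains the non-trivial homogeneous ideal $I$ from the primality theorem, invokes the wreath-product decomposition ${\rm Cay}(R/I,\Phi(S))\cdot E_n$ with its spectrum containing $|R/I|(|I|-1)$ copies of $0$, and concludes via the same inequality $|R/I|(|I|-1) = |R| - \frac{|R|}{|I|} \geq \frac{|R|}{2}$ using $|I|\geq 2$. Your added remark that this count is only a lower bound on the multiplicity of $0$ is a sensible precaution but does not change the argument.
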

\begin{proof}
This follows directly from the fact that 
\[ |R/I|(|I|-1) = |R| - \frac{|R|}{|I|} \geq |R| - \frac{|R|}{2} = \frac{|R|}{2}.
\qedhere\]
\end{proof} 

In the following proposition, we provide necessary and sufficient conditions for an ideal $I$ to be a homogeneous set in $\text{Cay}(R,S).$
\begin{prop} \label{prop:iff}
Let $R$ be a commutative ring and $S \subseteq R^{\times}.$  Let $I$ be an ideal in $R$. Then $I$ is a homogeneous set in ${\rm Cay}(R,S)$ if and only if one of the following conditions holds.
\begin{enumerate}
    \item $I=R.$
    \item $I \neq R$ and 
\[ S+I = \{s+m| s \in S, m \in I \} = S. \]

\end{enumerate}

\end{prop}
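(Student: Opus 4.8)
The plan is to reformulate homogeneity of the additive subgroup $I$ as a statement about its cosets and then to recognize that statement as the identity $S+I=S$. Throughout I would use that, in additive notation, two vertices $x,y$ of ${\rm Cay}(R,S)$ are adjacent exactly when $x-y\in S$ (recall $S=-S$), and that $I$, being an additive subgroup, satisfies $-I=I$.

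First I would dispatch the case $I=R$: here $I=V({\rm Cay}(R,S))$ contains every vertex, so it is vacuously homogeneous, which is condition (1). For the remainder I assume $I\neq R$ and record the useful fact that a proper ideal contains no units, so that $I\cap S=\emptyset$.

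For the forward direction, suppose $I$ is homogeneous and fix $s\in S$. Since $s$ is a unit and $I\neq R$ we have $s\notin I$, while $s-0=s\in S$ shows that $s$ is adjacent to $0\in I$. Homogeneity then forces $s$ to be adjacent to every $m\in I$, that is, $s-m\in S$ for all $m\in I$; using $-I=I$ this is precisely $s+I\subseteq S$. Taking the union over all $s\in S$ gives $S+I\subseteq S$, and $0\in I$ gives the reverse inclusion, so $S+I=S$, which is condition (2).

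For the converse, assume $S+I=S$ and take any $x\notin I$. If $x$ is adjacent to no element of $I$ there is nothing to prove; otherwise $x-m_0\in S$ for some $m_0\in I$, and for arbitrary $m\in I$ I would write $x-m=(x-m_0)+(m_0-m)$ with $m_0-m\in I$, so that $x-m\in S+I=S$. Hence $x$ is adjacent to every element of $I$, giving the all-or-none dichotomy that defines a homogeneous set. I do not anticipate a genuine obstacle here: each computation is a one-line manipulation, and the only real content is the clean translation of ``$I$ is homogeneous'' into ``$S$ is a union of cosets of $I$'', equivalently $S+I=S$.
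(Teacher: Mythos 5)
Your proposal is correct and follows essentially the same route as the paper: the forward direction uses adjacency of each $s\in S$ to $0\in I$ plus homogeneity exactly as the paper does, and your converse rests on the same one-line computation $x-m=(x-m_0)+(m_0-m)\in S+I=S$, merely packaged as ``adjacent to one element of $I$ implies adjacent to all'' instead of the paper's partition of $R$ into $I$, $S$, and $R\setminus(I\cup S)$ with a contradiction argument. No gaps; the cosmetic difference in the converse does not change the substance.
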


\begin{proof}
    First, suppose that $I$ is a homogeneous set. The case $I=R$ is trivial so we assume that $I \neq R.$ Since $S \subseteq R^{\times}$ and $I$ is an ideal, we conclude that $S \cap I = \emptyset.$ Let $x \in S$ and $m \in I.$ Since $(x,0) \in E(\text{Cay}(R,S))$ and $I$ is a homogeneous set, we have $(x,-m) \in E(\text{Cay}(R,S))$ as well. By definition, $x+m \in S.$

    Conversely, let us prove that if the following condition is satisfied
    \[ S+I = \{s+m| s \in S, m \in I \} \subseteq S, \]
    then $I$ is homogeneous. We remark that since $I \cap S = \emptyset$, we have the following partition of $R$
    \[ R = I \bigsqcup S \bigsqcup (R \setminus (I \cup S)) \]
    Let $x \in R \setminus I$. Then we know either $x \in S$ or $x \in R \setminus (I \cup S).$ If $x \in S$ then $(x,m) \in E(\text{Cay}(R,S))$ for all $m \in I.$ Let us consider the case $x \not \in (I \cup S).$ We claim that $(x,m) \not \in E(\text{Cay}(R,S))$ for all $m\in I.$ Suppose to the contrary, then $x-m \in S$ for some $m \in I.$ We then have
    \[ x = (x-m)+m \in S+I = S.\]
    This shows that $x \in S$, which is a contradiction. 
\end{proof}

\begin{cor}
    Suppose that $I, J$ are two ideals in a commutative ring $R$ such that both of them are homogeneous sets in ${\rm Cay}(R,S)$. Then $I+J$ is also a homogeneous set.
\end{cor}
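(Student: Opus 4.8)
The plan is to use the characterization from \cref{prop:iff}, which reduces the homogeneity of an ideal to a simple additive-stability condition on $S$. Recall that an ideal $K \neq R$ is a homogeneous set in $\text{Cay}(R,S)$ if and only if $S + K = S$; equivalently, since $0 \in K$ always gives $S \subseteq S+K$, the condition is that $S + K \subseteq S$. So the entire corollary comes down to showing that if $S + I \subseteq S$ and $S + J \subseteq S$, then $S + (I+J) \subseteq S$ (after disposing of the degenerate case $I + J = R$).

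First I would handle the trivial case: if $I + J = R$, then $I+J$ is a homogeneous set by condition (1) of \cref{prop:iff}, and we are done. So assume $I + J \neq R$. By \cref{prop:iff} applied to $I$ and to $J$ (both of which are homogeneous by hypothesis), we have $S + I = S$ and $S + J = S$. The goal is to verify condition (2) for the ideal $I+J$, namely $S + (I+J) = S$.

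For the main computation, take any $s \in S$ and any element $m \in I+J$, and write $m = a + b$ with $a \in I$ and $b \in J$. Then
\[ s + m = (s + a) + b. \]
Since $S + I = S$, we have $s + a \in S$; call it $s'$. Then $s' + b \in S + J = S$, so $s + m = s' + b \in S$. This shows $S + (I+J) \subseteq S$, and combined with the automatic reverse inclusion gives $S + (I+J) = S$. By \cref{prop:iff}, $I+J$ is a homogeneous set.

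I do not anticipate a genuine obstacle here: the result is essentially the observation that the additive-stability condition ``$S + K = S$'' is preserved under sums of ideals, which follows by peeling off the $I$-part and the $J$-part of a general element of $I+J$ one at a time. The only point requiring a moment's care is to first separate out the case $I+J = R$, since \cref{prop:iff} states its additive criterion only for proper ideals; once that case is set aside, the argument is the direct two-line computation above.
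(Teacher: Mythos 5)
Your proof is correct and is exactly the argument the paper intends: the corollary is stated without proof as an immediate consequence of \cref{prop:iff}, and the intended justification is precisely your computation that $S+I=S$ and $S+J=S$ give $S+(I+J)=S$ by peeling off the two summands. Your explicit handling of the degenerate case $I+J=R$ via condition (1) is a careful touch the paper leaves implicit (and which is in fact vacuous once \cref{prop:contained_jacobson} shows homogeneous proper ideals lie in $\Rad(R)$, though that result appears only later).
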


We now recall the definition of the Jacobson radical of a commutative ring. 
\begin{defn} \label{defn:jacobson}
Let $R$ be a commutative ring. The Jacobson radical of $R$ is defined the the following equivalent definitions (see \cite[Section IV]{ mcdonald1974finite} and \cite[Chapter 4.3]{pierce1982associative}).

\begin{enumerate}
    \item $\Rad(R) = \cap M $ where $M$ runs through the set of all maximal ideals of $R.$
    \item $\Rad(R) = \left\{r \in R| 1- rs \in R^{\times} \text{ for all } s \in R \right \}$.
    \item $\Rad(R)$ is an ideal and it is the largest ideal $K$ such that $1-r$ is a unit in $R$ for all $r \in K.$
    \item $\Rad(R)$ is the largest nilpotent ideal in $R.$
\end{enumerate}
\end{defn}

Now, suppose that $I$ is a non-trivial ideal in $R$ such that $I$ is a homogeneous set in ${\rm Cay}(R, S)$. By \cref{prop:iff} we know that 
\[ 1+ I = \left \{1 + m| m  \in I \right \} \subset S \subset R^{\times}.\]
Therefore, by Condition $3$ in \cref{defn:jacobson}, we conclude that $I \subset \Rad(R).$
Thus we have proved the following.
\begin{prop} \label{prop:contained_jacobson}
    Each non-trivial homogeneous ideal in $R$ is a subset of $\Rad(R).$ In particular, each such ideal is nilpotent. 
\end{prop}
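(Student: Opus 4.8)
The plan is to chain together \cref{prop:iff} with characterizations (3) and (4) of the Jacobson radical from \cref{defn:jacobson}. The starting point is that $I$ is a non-trivial homogeneous ideal, so in particular $I \neq R$; thus the second alternative of \cref{prop:iff} applies and gives $S + I = S$.

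Next, I would exploit that $S$ is a subgroup of $R^{\times}$, so that $1 \in S$. Adding all of $I$ to this single element yields $1 + I \subseteq S + I = S \subseteq R^{\times}$, that is, $1 + m$ is a unit for every $m \in I$. Since $I$ is an ideal we have $-I = I$, so replacing $m$ by $-m$ shows equally that $1 - m \in R^{\times}$ for all $m \in I$.

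Now characterization (3) of \cref{defn:jacobson} describes $\Rad(R)$ as the largest ideal $K$ with the property that $1 - r$ is a unit for every $r \in K$. The previous step exhibits $I$ as an ideal enjoying exactly this property, so by maximality $I \subseteq \Rad(R)$. Finally, because all our graphs, and hence $R$, are finite, $R$ is Artinian and characterization (4) tells us $\Rad(R)$ is nilpotent; an ideal contained in a nilpotent ideal is itself nilpotent, which gives the ``in particular'' clause.

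I do not expect a genuine obstacle here: the argument is a short deduction once \cref{prop:iff} is in hand. The only points requiring care are the bookkeeping that passes from ``$1+m$ is a unit'' to the sign convention ``$1-r$'' demanded by characterization (3), which is handled by the observation $-I = I$, and the appeal to finiteness of $R$ to guarantee nilpotency of $\Rad(R)$, which is needed because over a general commutative ring the Jacobson radical need not be nilpotent.
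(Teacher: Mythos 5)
Your proof is correct and matches the paper's argument essentially step for step: both deduce $1+I \subseteq S+I = S \subseteq R^{\times}$ from \cref{prop:iff} and $1 \in S$, then invoke characterization (3) of \cref{defn:jacobson} to get $I \subseteq \Rad(R)$, with nilpotency following from characterization (4). Your explicit handling of the sign convention via $-I = I$ and your remark on the finiteness of $R$ are small points of care that the paper leaves implicit, but they do not constitute a different route.
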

We recall that a finite commutative ring is called semisimple if $\Rad(R)=R$ (see \cite[Proposition a, Chapter 3]{pierce1982associative} and take into account that finite commutative ring is Artinian). By \cref{thm:prime_implies_ideals} and \cref{prop:contained_jacobson} we have the following corollary.

\begin{cor}
     Let $R$ be a semisimple commutative ring and let $S \subseteq R^{\times}$ be a subgroup. Suppose that ${\rm Cay}(R, S)$ is connected and anti-connected. Then ${\rm Cay}(R,S)$ is prime. 
\end{cor}
We now study the functorial properties of $\text{Cay}(R,S)$. For this, we introduce the following definition.
\begin{defn} \label{def:homomorphism}
Let $R,R'$ be rings and let $S,S'$ be subgroups of $R^{\times}, (R')^{\times}$ respectively. 

\begin{enumerate}
    \item A homomorphism from $(R,S) \to (R',S')$ is a ring homomorphism $\Phi: R \to R'$ such that $S=\Phi^{-1}(S').$ 
    \item Without causing any confusion, we will denote this homomorphism by $\Phi$ as well. 
 We say that $\Phi$ is surjective if the ring map $\Phi: R \to R'$ is.    
 \end{enumerate}
\end{defn}
With this definition, we have the following proposition. 

\begin{prop} \label{prop:47}
    Let $R,R'$ be rings and let $S,S'$ be subgroups of $R^{\times}, (R')^{\times}$ respectively. Let $\Phi\colon  (R, S) \to (R',S')$ be a surjective homomorphism. Let $I' \neq R'$ be an ideal in $R$ such that $I'$ is a homogeneous set in ${\rm Cay}(R', S').$ Then $I=\Phi^{-1}(I')$ is an ideal in $R$ which is also a homogeneous set in ${\rm Cay}(R,S).$
\end{prop}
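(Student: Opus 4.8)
The plan is to verify the two assertions separately and to reduce the homogeneity claim to the characterization in \cref{prop:iff}, exploiting the fact that the defining condition of a homomorphism of pairs is the \emph{exact} equality $S=\Phi^{-1}(S')$ rather than a mere inclusion $\Phi(S)\subseteq S'$.

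First I would note that $I=\Phi^{-1}(I')$ is an ideal in $R$: this is the standard fact that the preimage of an ideal under a ring homomorphism is again an ideal, so no work is needed here. Next, I would check that $I\neq R$. If we had $I=R$, then $\Phi(R)\subseteq I'$; but $\Phi$ is surjective, so $\Phi(R)=R'$ and hence $I'=R'$, contradicting the hypothesis $I'\neq R'$. Thus $I$ is a proper ideal of $R$.

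The heart of the argument is to verify the criterion in condition (2) of \cref{prop:iff}, namely that $S+I=S$. Since $I'$ is a homogeneous set with $I'\neq R'$, \cref{prop:iff} applied in $\mathrm{Cay}(R',S')$ gives $S'+I'=S'$. Now take $s\in S$ and $m\in I$. Applying $\Phi$ and using that it is a ring homomorphism, $\Phi(s+m)=\Phi(s)+\Phi(m)$. Because $s\in S=\Phi^{-1}(S')$ we have $\Phi(s)\in S'$, and because $m\in I=\Phi^{-1}(I')$ we have $\Phi(m)\in I'$; hence $\Phi(s+m)\in S'+I'=S'$. Therefore $s+m\in\Phi^{-1}(S')=S$, which shows $S+I\subseteq S$. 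The reverse inclusion is immediate since $0\in I$, so $S+I=S$, and \cref{prop:iff} now yields that $I$ is a homogeneous set in $\mathrm{Cay}(R,S)$.

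There is no serious obstacle here: the only point that must be used with care is that a homomorphism of pairs satisfies $S=\Phi^{-1}(S')$ on the nose. The forward inclusion $\Phi(S)\subseteq S'$ alone would not suffice, since the final step requires pulling the element $s+m$ back from $S'$ into $S$, which is exactly the reverse containment $\Phi^{-1}(S')\subseteq S$. I would also remark that this computation exhibits the construction as compatible with the quotient description of wreath-product decompositions established earlier in this section.
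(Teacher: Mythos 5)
Your proof is correct and takes essentially the same route as the paper's: both reduce the homogeneity claim to the criterion $S+I=S$ from \cref{prop:iff} and deduce it from $S'+I'=S'$ together with the preimage equality $S=\Phi^{-1}(S')$. The only cosmetic differences are that you verify $S+I\subseteq S$ elementwise and rule out $I=R$ via surjectivity, whereas the paper writes $\Phi(S+I)=\Phi(S)+\Phi(I)=S'+I'=S'$ and notes $S'\cap I'=\emptyset$; the substance is identical.
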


\begin{proof}
    By definition $\Phi^{-1}(I)$ is an ideal in $R.$ We note that since $S' \cap I' = \emptyset$, we must have $I \neq R$. To show that $I$ is homogeneous, we only need to show that 
    \[ S + I = S.\]
    In fact, we have 
    \[ \Phi(S+I) = \Phi(S)+\Phi(I) = S' + I' = S'. \]
    Therefore 
    \[ S+I \subseteq \Phi^{-1}(S') = S. \]
    Since we always have $S \subseteq S+I$, we conclude that $S+I = S.$
\end{proof}

\begin{rem}
    We compare \cref{def:homomorphism} to the notion of a graph homomorphism defined in \cref{def:graph_morphism}. We remark that with \cref{def:graph_morphism}, if $f(x) = f(y)$, then $x$ and $y$ are non-adjacent in $G$. We  also note that in general, if $f$ is a graph homomorphism, then $f^{-1}(X)$, where $X$ is a homogeneous set in $H$, need not be a homogeneous set in $G$. For this to be true, we need the modified assumption that if $f(x) \neq f(y)$, then $xy \in E(G)$ if and only if $f(x)f(y) \in E(H)$ (note that this assumption does not imply that $f$ is a graph homomorphism, as it allows for adjacent vertices to have the same image). The setting of \cref{prop:47} in fact gives us a graph homomorphism from $\text{Cay}(R, S)$ to $\text{Cay}(R', S')$ which also satisfies the modified assumption. 
\end{rem}


\begin{lem}
Let $R$ be a commutative ring and let $S \subseteq R^{\times}.$  Let $I$ be an ideal in $R$ which is also a homogeneous set in ${\rm Cay}(R,S)$ such that $I \neq R.$ Let $\Phi\colon R \to R/I$ be the canonical ring map. Then $\Phi^{-1} (\Phi(S))=S.$ In other words, the induced map $\Phi\colon (R,S) \to (R/I, S/I)$ is a surjective homomorphism.
\end{lem}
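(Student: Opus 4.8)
The statement to prove is: if $I$ is an ideal that is a homogeneous set in $\mathrm{Cay}(R,S)$ with $I \neq R$, and $\Phi\colon R \to R/I$ is the canonical quotient map, then $\Phi^{-1}(\Phi(S)) = S$. The plan is to unpack $\Phi^{-1}(\Phi(S))$ directly and show it collapses back to $S$. The inclusion $S \subseteq \Phi^{-1}(\Phi(S))$ is automatic for any map, so the entire content is the reverse inclusion $\Phi^{-1}(\Phi(S)) \subseteq S$.

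First I would translate membership in $\Phi^{-1}(\Phi(S))$ into a concrete condition. An element $x \in R$ lies in $\Phi^{-1}(\Phi(S))$ precisely when $\Phi(x) = \Phi(s)$ for some $s \in S$, that is, when $x \equiv s \pmod I$, which means $x = s + m$ for some $s \in S$ and $m \in I$. Hence $\Phi^{-1}(\Phi(S)) = S + I$ in the notation of \cref{prop:iff}. So the claim reduces to showing $S + I = S$.

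The key step is now immediate from the hypotheses: since $I$ is a homogeneous set and $I \neq R$, \cref{prop:iff} tells us that condition (2) holds, namely $S + I = S$. This is exactly the equality we needed, so $\Phi^{-1}(\Phi(S)) = S + I = S$, completing the first assertion. For the second sentence of the statement, I would observe that $\Phi(S)$ is a subgroup of $(R/I)^{\times}$ — its elements are units because each $s \in S$ is a unit in $R$ and $\Phi$ is a ring homomorphism, and it is a subgroup because $\Phi$ restricted to the group $S$ is a group homomorphism into the unit group — and then note that the equality $\Phi^{-1}(\Phi(S)) = S$ is precisely the condition $S = \Phi^{-1}(S')$ with $S' = \Phi(S) = S/I$ required by \cref{def:homomorphism} for $\Phi$ to be a homomorphism of pairs; surjectivity of the underlying ring map $\Phi\colon R \to R/I$ is clear.

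I do not anticipate a genuine obstacle here: the lemma is essentially a repackaging of the forward direction of \cref{prop:iff} together with the defining property of the quotient map. The only point requiring a moment of care is the bookkeeping identity $\Phi^{-1}(\Phi(S)) = S + I$, which rests on the standard fact that the fibers of the quotient map $R \to R/I$ are exactly the cosets of $I$; once this is spelled out, everything else follows by direct substitution.
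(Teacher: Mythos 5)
Your proof is correct and follows essentially the same route as the paper: both reduce the nontrivial inclusion $\Phi^{-1}(\Phi(S)) \subseteq S$ to the identity $S+I=S$ supplied by the forward direction of \cref{prop:iff}, the only cosmetic difference being that you phrase it as the set identity $\Phi^{-1}(\Phi(S)) = S+I$ while the paper chases a single element $t = s+m$. Your additional check that $\Phi(S)$ is a subgroup of $(R/I)^{\times}$, which the paper leaves implicit, is a harmless and correct supplement.
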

\begin{proof} 
    We always have $S \subseteq\Phi^{-1}(\Phi(S)).$ Let $t \in \Phi^{-1}(\Phi(S)).$ Then $\Phi(t) \in \Phi(S).$ That means we can find $s \in S$ such that $\Phi(t)=\Phi(s)$. Equivalently, $t-s =m$ for some $m \in I.$ We then have 
    \[ t = s+m. \]
    However, since $I$ is a homogeneous set, from \cref{prop:iff} we know that $I+S =S.$ This shows that $t \in S.$ We conclude that $\Phi^{-1}(\Phi(S)) \subseteq S$ and hence $\Phi^{-1}(\Phi(S)) = S.$
\end{proof}

We recall that a positive integer $p$ is a \emph{prime number} if it has no non-trivial factor. Said differently, $p$ is a prime number if for every $n \geq 1$ and every surjective homomorphism $ \Phi:\Z/p \to \Z/n$, we have that $\Phi$ is an isomorphism. Inspired by this fact, we make the following definition. 
\begin{defn}
    Let $R$ be a ring and $S \subseteq R^{\times}.$ We say that the pair $(R,S)$ is primitive if it is minimal in the following sense: if $\Phi:(R,S) \to (R',S')$ is a surjective homomorphism, then it is an isomorphism. 
\end{defn}

Roughly speaking, the pair $(R,S)$ is primitive if it has no non-trivial quotients. Combining the results that we proved so far, we have: 

\begin{prop}
Let $R$ be a commutative ring and $S \subseteq R^{\times}$. Suppose that ${\rm Cay}(R, S)$ is connected and anti-connected. Then ${\rm Cay}(R,S)$ is prime if and only if the pair $(R,S)$ is primitive.
\end{prop}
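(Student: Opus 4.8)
The plan is to prove the two implications separately, in each case translating between ``non-trivial homogeneous set'' and ``proper, non-isomorphic quotient'' via the kernel–quotient correspondence; the substantive content is already packaged into \cref{thm:prime_implies_ideals}, \cref{prop:iff}, and the lemma computing $\Phi^{-1}(\Phi(S))=S$ for the canonical quotient, so what remains is essentially a dictionary. Throughout I assume $R\neq 0$ (the zero ring gives a one-vertex graph, which is trivially prime, paired with a trivially primitive pair, so both sides hold). I would also record one basic fact used in the forward direction: any homomorphism of pairs $\Phi\colon (R,S)\to(R',S')$ with $R\neq 0$ must have $R'\neq 0$, since otherwise $(R')^\times=\{0\}$ forces $S'=\{0\}$ and then the defining condition $S=\Phi^{-1}(S')$ would give $S=R$, contradicting $0\notin S$. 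In particular $\ker\Phi$ is a proper ideal.

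For the direction ``prime $\Rightarrow$ primitive,'' I would take an arbitrary surjective homomorphism of pairs $\Phi\colon (R,S)\to(R',S')$ and set $I=\ker\Phi$, a proper ideal by the remark above. The first step is to verify that $I$ is a homogeneous set via \cref{prop:iff}: for $s\in S$ and $m\in I$ we have $\Phi(s+m)=\Phi(s)\in S'$, so $s+m\in\Phi^{-1}(S')=S$; hence $S+I\subseteq S$, and the reverse inclusion is immediate (take $m=0$), so $S+I=S$. Since $I\neq R$, \cref{prop:iff} yields that $I$ is homogeneous. Because $\text{Cay}(R,S)$ is prime it admits no non-trivial homogeneous set, and $I\neq R$, so necessarily $|I|\leq 1$, i.e.\ $I=0$. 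Thus $\Phi$ is injective and, being surjective, an isomorphism; therefore $(R,S)$ is primitive.

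For ``primitive $\Rightarrow$ prime'' I would argue by contraposition. If $\text{Cay}(R,S)$ is not prime, then, using that it is connected and anti-connected, \cref{thm:prime_implies_ideals} produces a non-trivial ideal $I$ with $0\neq I\neq R$ that is a homogeneous set. The lemma on quotient maps then shows $\Phi^{-1}(\Phi(S))=S$ for the canonical projection $\Phi\colon(R,S)\to(R/I,\Phi(S))$, which is exactly the condition of \cref{def:homomorphism} making $\Phi$ a surjective homomorphism of pairs; here $\Phi(S)$ is a subgroup of $(R/I)^\times$ since $\Phi$ carries units to units and is multiplicative. As $\ker\Phi=I\neq 0$, this $\Phi$ is not an isomorphism, so $(R,S)$ fails to be primitive, which completes the contrapositive.

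I expect no serious obstacle, since all the real work lives in the cited results and the present statement merely identifies non-trivial homogeneous ideals with proper non-isomorphic quotients. The only points requiring care are the degenerate bookkeeping — excluding the zero-ring target so that $\ker\Phi\neq R$, and matching ``$I$ non-trivial homogeneous'' precisely with ``$0\neq I\neq R$'' — together with verifying the homomorphism-of-pairs condition $S=\Phi^{-1}(S')$ in both directions, which is supplied respectively by the kernel computation above and by the quotient lemma.
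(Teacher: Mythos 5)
Your proof is correct and follows essentially the same route as the paper: the forward direction pulls back $\{0\}$ along $\Phi$ (you simply inline the computation $S+I=S$ that the paper delegates to \cref{prop:47}), and the converse combines \cref{thm:prime_implies_ideals} with the quotient lemma exactly as the paper does. Your extra bookkeeping ruling out the zero-ring target (so that $\ker\Phi\neq R$) is a small point the paper glosses over, and it is handled correctly.
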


\begin{proof}
    If $\text{Cay}(R,S)$ is not prime, then there exists a non-trivial ideal $I$ such that $I$ is a homogeneous set. We then have a surjective homomorphism $\Phi\colon (R,S) \to (R/I, S/I)$. Since $I$ is non-trivial, it follows that $\Phi$ is not an isomorphism. 

    Conversely, suppose that $\text{Cay}(R,S)$ is a prime graph. Let $\Phi\colon (R,S) \to (R',S')$ be a surjective homomorphism. Since $\{0 \}$ is a homogeneous ideal in $\text{Cay}(R',S')$, we know that $\ker(\Phi) = \Phi^{-1}(0)$ is a homogeneous set in $\text{Cay}(R,S)$ as well. Since $\text{Cay}(R,S)$ is prime, we conclude that $\ker(\Phi)= \{0\}.$ This shows that $\Phi$ is an isomorphism.
\end{proof}

\subsection{Generalized Paley graphs}
In this section, we will focus on the case where $S$ is associated with a multiplicative function on $R.$ To start, let us consider $\psi\colon R \to \C$ to be a multiplicative function, namely 
\[ \psi(ab) = \psi(a) \psi(b), \forall a, b \in R, \]
$\psi(0)=0$ and $\psi(1)=1$.     We say that $\psi$ is an even function if $\psi(-1)=1.$

\begin{defn}
    The Paley digraph $P_{\psi}$ is defined as the Cayley graph $\text{Cay}((R,+), \ker(\psi))$ where
    \[ \ker(\psi) = \{a \in R| \psi(a)=1 \}. \]
\end{defn}

 We can see easily that $\text{Cay}(R, \ker(\psi))$ is an undirected graph if and only if $\psi$ is even. From now on, we will always assume that $\psi$ is even.

The name generalized Paley digraph is motivated by the classical Paley digraph ${\rm Cay}(\F_q, \ker(\psi))$ where $\F_q$ is a finite field with $q$ elements and $\psi: \F_q \to \C$ is defined by

\[ \psi(a)  = \begin{cases}
   0 & \text{if $a=0$ } \\ 
  1  &  \text{if $a \in \F_{q}^{\times}$ and $a$ is a square in $\F_q$ } \\
  -1 &  \text{else. }
\end{cases} \]  

These digraphs are named after the English mathematician  Raymond Paley (see \cite{paleygraph}).
\begin{rem}
    By definition, $1 \in \ker(\psi).$ Consequently, $P_{\psi}$ is not an empty graph. Said differently, its complement $P_{\psi}^c$ is not a complete graph. 
\end{rem}

\begin{rem}
    We remark that in the above definition, we do not require that $\ker(\psi)$ is a subgroup of $R^{\times}.$ However, as we will see later, this property is automatic if $\psi$ is primitive, a concept that we will define below (see Lemma \ref{lem:primitive_implies_unit}).
\end{rem}

\begin{defn}
Let $I$ be a non-trivial ideal in $R$. Let $\psi\colon R/I \to \C$ be a multiplicative function. Let $\widehat{\psi_I}\colon R \to R/I \to \C$ be the induced multiplicative function. Such $\widehat{\psi_I}$ is called a non-primitive multiplicative function, and we say that $\widehat{\psi_I}$ factors through $R/I$. If $\psi: R \to \C$ is a multiplicative function which is not of the form $\widehat{\psi_I}$ for non-trivial $I$, then we say that $\psi$ is a primitive.
\end{defn} 
From now on, without further notice, we will assume that $R$ is a commutative ring. 

\begin{prop}
\label{prop:not_primitive_implies_not_prime}
   If $\psi$ is not primitive, then $P_{\psi}$ is not prime. 
\end{prop}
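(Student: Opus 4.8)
The plan is to exhibit an explicit non-trivial homogeneous set in $P_\psi = \text{Cay}(R, \ker(\psi))$. Since $\psi$ is not primitive, by definition there is a non-trivial ideal $I$ together with a multiplicative function $\overline{\psi}\colon R/I \to \C$ such that $\psi = \overline{\psi} \circ \Phi$, where $\Phi\colon R \to R/I$ is the canonical surjection. I claim that $I$ itself is the desired homogeneous set, and the whole argument is an unwinding of what it means for $\psi$ to factor through $R/I$.

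First I would record the computation that makes the factorization do all the work. For any $x \in R$ and any $m \in I$ we have $\Phi(x - m) = \Phi(x) - \Phi(m) = \Phi(x)$, since $m \in I = \ker(\Phi)$. Applying $\overline{\psi}$ gives
\[ \psi(x - m) = \overline{\psi}(\Phi(x - m)) = \overline{\psi}(\Phi(x)) = \psi(x), \]
so that the value $\psi(x - m)$ does not depend on the choice of $m \in I$.

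From this I would deduce homogeneity directly. Recall that in $P_\psi$ two vertices are adjacent precisely when their difference lies in $\ker(\psi)$; that is, $x$ and $m$ are adjacent if and only if $\psi(x - m) = 1$. Now fix any $x \notin I$. By the computation above, $\psi(x - m) = \psi(x)$ for every $m \in I$, and hence either $\psi(x) = 1$, in which case $x$ is adjacent to every element of $I$, or $\psi(x) \neq 1$, in which case $x$ is adjacent to no element of $I$. This is exactly the defining condition for $I$ to be a homogeneous set. Finally, since $I$ is a non-trivial ideal we have $\{0\} \subsetneq I \subsetneq R$, so $2 \le |I| < |R| = |V(P_\psi)|$ and the homogeneous set is non-trivial. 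Therefore $P_\psi$ is not prime.

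The argument is short, and the single point that needs care is that I cannot simply invoke \cref{prop:iff}: that criterion is stated for $S \subseteq R^{\times}$, whereas here $S = \ker(\psi)$ is \emph{not} assumed to consist of units, so in particular $S \cap I$ need not be empty. The direct verification above sidesteps this entirely, since it never uses $S \cap I = \emptyset$ and relies only on the factorization of $\psi$ through $R/I$. This is the main (and rather mild) obstacle; everything else is routine.
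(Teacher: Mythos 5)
Your proof is correct and takes essentially the same route as the paper's: both exhibit the ideal $I$ through which $\psi$ factors as a non-trivial homogeneous set, verifying homogeneity directly from $\psi(x-m)=\psi(x)$ for $m \in I$ rather than via \cref{prop:iff} (whose hypothesis $S \subseteq R^{\times}$ indeed need not hold here, as you rightly note). The only cosmetic difference is that the paper invokes the standing evenness assumption explicitly (writing $\psi(a)=\psi(a-h)=\psi(h-a)$) where you use it implicitly in the adjacency characterization, and you spell out the non-triviality check $2 \le |I| < |R|$ which the paper leaves implicit.
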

\begin{proof}
Since $\psi$ is not primitive, we can find a non-trivial ideal $I$ such that $\psi = \widehat{\psi_I}$ where $\psi_I: R/I \to \C$ is a multiplicative function. We claim that $I$ is a homogeneous set in $P_{\psi} = P_{\widehat{\psi_I}}$. In fact, let $a \in R$ and $ a \not \in I.$ Then for $h \in I$, we have $\psi(a)=\psi(a-h)=\psi(h-a),$ using that $\psi$ is even. Consequently, if $\psi(a)=1$ then $(a,h) \in E(P_{\psi})$ for all $h \in I.$ Otherwise, $(a,h) \not \in E(P_{\psi})$ for all $h \in I.$ By definition, $I$ is a homogeneous set and hence $P_{\psi}$ is not prime.
\end{proof} 

It turns out that the converse of this proposition also holds under some mild assumptions. To show this fact, we first introduce the following lemma. 


\begin{lem} \label{lem:primitive_implies_unit}
        Let $R$ be a finite commutative ring. Suppose that $\psi: R \to \C$ is primitive. Then 
        \[ R^{\times} = \{a | \psi(a) \neq 0 .\} .\]
        In particular, $\ker(\psi) \subseteq R^{\times}.$
\end{lem}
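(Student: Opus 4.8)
The plan is to prove the two inclusions separately: the inclusion $R^{\times} \subseteq \{a : \psi(a) \neq 0\}$ is immediate, and the reverse inclusion is the real content and is handled by a contradiction argument built from idempotents. For the easy direction, if $ab = 1$ then $\psi(a)\psi(b) = \psi(ab) = \psi(1) = 1$, so $\psi(a) \neq 0$. The concluding assertion $\ker(\psi) \subseteq R^{\times}$ then follows at once, since $\ker(\psi) = \{a : \psi(a) = 1\} \subseteq \{a : \psi(a) \neq 0\} = R^{\times}$.

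For the reverse inclusion, suppose $\psi(a) \neq 0$; I claim $a \in R^{\times}$. Since $R$ is finite, the multiplicative sub-monoid generated by $a$ is finite, so some power $e = a^{m}$ with $m \geq 1$ is idempotent: choosing $i < j$ with $a^i = a^j$ and setting $d = j-i$, any multiple $m$ of $d$ with $m \geq i$ satisfies $a^{m} = a^{2m} = (a^{m})^2$. Because $\psi(a) \neq 0$, we get $\psi(e) = \psi(a)^{m} \neq 0$; and as $e$ is idempotent, $\psi(e) = \psi(e)^2$, forcing $\psi(e) = 1$, so in particular $e \neq 0$.

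The crux is to rule out $e \neq 1$. If $e \neq 1$, set $I = (1-e)R$. This is a non-trivial ideal: it is nonzero because $1 - e \neq 0$, and it is proper because $(1-e)e = 0$ with $e \neq 0$ shows $1-e$ is a zero divisor, so $1 \notin I$. I would then show $\psi$ factors through $R/I$, contradicting primitivity. The mechanism is the identity $\psi(ex) = \psi(e)\psi(x) = \psi(x)$, valid for every $x \in R$: if $x$ and $x'$ lie in the same coset of $I$, say $x - x' = (1-e)r$, then $e(x-x') = (e - e^2)r = 0$, hence $ex = ex'$ and therefore $\psi(x) = \psi(ex) = \psi(ex') = \psi(x')$. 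Thus $\psi$ is constant on cosets of $I$ (and vanishes on $I$, since $\psi(0)=0$), so it descends to a multiplicative function $\psi_I$ on $R/I$ with $\psi = \widehat{\psi_I}$; this contradicts primitivity. Hence $e = 1$, i.e.\ $a \cdot a^{m-1} = 1$, and $a \in R^{\times}$.

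The only genuinely delicate point is verifying that $\psi$ is constant on the cosets of $I = (1-e)R$; everything else — existence of an idempotent power and non-triviality of $I$ — is routine. I expect no serious obstacle, as the multiplicativity of $\psi$ combined with the idempotent identity $e(x-x') = 0$ carries the whole argument. Note that commutativity is used precisely to guarantee that $(1-e)R$ is an ideal, and finiteness precisely to produce the idempotent power, so both hypotheses enter exactly once.
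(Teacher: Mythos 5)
Your proof is correct, and the final statement plus the easy inclusion match the paper verbatim; but your hard direction takes a genuinely different route. The paper's proof also works by producing an ideal on whose cosets $\psi$ is constant and invoking primitivity, but it gets the ideal in one step: it sets $I = \ker(m_a)$, the annihilator $\{b \in R : ab = 0\}$ of $a$, and observes $\psi(a)\psi(x) = \psi(ax) = \psi(a(x+b)) = \psi(a)\psi(x+b)$ for all $b \in I$, so cancelling $\psi(a) \neq 0$ shows $\psi$ factors through $R/I$; primitivity forces $I = \{0\}$, and finiteness is spent only at the very end, where injectivity of $m_a$ on a finite set gives surjectivity and hence a solution of $ax = 1$. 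You instead spend finiteness at the start to extract an idempotent power $e = a^m$, normalize $\psi(e) = 1$, and run the factoring argument on $I = (1-e)R$; note that $(1-e)R$ is exactly $\mathrm{Ann}(e) = \mathrm{Ann}(a^m) \supseteq \mathrm{Ann}(a)$, so your contradiction is applied to a (weakly) larger ideal than the paper's. What your version buys is an explicit inverse $a^{m-1}$ with no appeal to the injective-implies-surjective pigeonhole; what the paper's buys is brevity, since the single cancellation $\psi(a)\psi(x) = \psi(a)\psi(x+b)$ replaces your idempotent construction and the properness check for $(1-e)R$. The delicate points in your write-up all verify: $(1-e)R$ is proper because $1 = (1-e)r$ would give $e = e(1-e)r = 0$; constancy on cosets follows from $e(x-x') = 0$ together with $\psi(ex) = \psi(e)\psi(x) = \psi(x)$; and the descended $\psi_I$ is multiplicative with $\psi_I(\bar{0}) = 0$ and $\psi_I(\bar{1}) = 1$, so it is a legitimate witness against primitivity in the paper's sense.
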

\begin{proof}
    If $a \in R^{\times}$ then 
    \[ 1 = \psi(1) = \psi(a) \psi(a^{-1}). \] 
    This shows that $\psi(a) \neq 0$. Conversely, suppose $a \in R$ such that $\psi(a) \neq 0.$ We will show that $a \in R^{\times}.$ The multiplication map $m_a : R \to R$ sending $x \mapsto ax$ is a group homomorphism. Let $I=\ker(m_a).$ We claim that $\psi(x+b)=\psi(x)$ for all $x \in R$ and $b \in I.$ In fact, we have 
    \[ \psi(a) \psi(x) = \psi(ax)=\psi(a(x+b))=\psi(a) \psi(x+b).\]
    Since $\psi(a) \neq 0$, we conclude that $\psi(x)=\psi(x+b).$ This shows that $\psi$ factors through $R/I.$ Since $\psi$ is primitive, we conclude that $I= \{0 \}$ and consequently $a \in R^{\times}.$
\end{proof}


\begin{lem}
    Assume that $\psi: R \to \C$ is primitive. Let $I$ be an ideal of $R$ such that $I$ is a homogeneous set in $P_{\psi}.$ Then $I= \{0 \}$ or $I=R.$
\end{lem}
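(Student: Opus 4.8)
The plan is to reduce immediately to the case $I \neq R$ and then show that the homogeneity of $I$ forces $\psi$ to be constant on cosets of $I$, so that $\psi$ factors through $R/I$; primitivity will then leave $I = \{0\}$ as the only option.

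First I would set $S := \ker(\psi)$, so that $P_\psi = \text{Cay}((R,+),S)$. Since $\psi$ is primitive, \cref{lem:primitive_implies_unit} gives $S \subseteq R^{\times}$, and moreover $\psi(a) \neq 0$ precisely when $a \in R^{\times}$; this lets me apply \cref{prop:iff} with this $S$. If $I = R$ there is nothing to prove, so assume $I \neq R$. Then \cref{prop:iff} yields $S + I = S$, i.e. $\psi(s+m) = 1$ whenever $\psi(s)=1$ and $m \in I$.

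The heart of the argument is to upgrade this into the statement $\psi(x+m) = \psi(x)$ for \emph{all} $x \in R$ and $m \in I$. For $x \in R^{\times}$ I would write $x + m = x\,(1 + x^{-1}m)$; as $I$ is an ideal, $x^{-1}m \in I$, so $1 + x^{-1}m \in 1 + I \subseteq S + I = S$ (using $1 \in S$), whence $\psi(1 + x^{-1}m) = 1$ and multiplicativity gives $\psi(x+m) = \psi(x)\,\psi(1+x^{-1}m) = \psi(x)$. For $x \notin R^{\times}$ we have $\psi(x) = 0$, and I claim $x + m \notin R^{\times}$: otherwise $x+m$ would be a unit, and applying the unit case just proved to the element $x+m$ and to $-m \in I$ would give $\psi(x) = \psi\big((x+m) + (-m)\big) = \psi(x+m) \neq 0$, a contradiction with \cref{lem:primitive_implies_unit}. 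Hence $x+m$ is a non-unit and $\psi(x+m) = 0 = \psi(x)$.

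With $\psi$ constant on $I$-cosets, it descends to a well-defined $\psi_I\colon R/I \to \C$ with $\psi_I(x+I) = \psi(x)$; multiplicativity together with $\psi_I(0) = 0$ and $\psi_I(1) = 1$ transfer at once, so $\psi = \widehat{\psi_I}$ factors through $R/I$. Primitivity of $\psi$ then forces $I$ to be trivial, i.e. $I = \{0\}$, completing the dichotomy $I = \{0\}$ or $I = R$. The only genuinely delicate step is the non-unit case, which relies essentially on \cref{lem:primitive_implies_unit} to identify the vanishing locus of $\psi$ with the non-units; everything else is the single multiplicative identity $\psi(x+m) = \psi(x)\,\psi(1 + x^{-1}m)$.
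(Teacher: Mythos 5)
Your proof is correct and takes essentially the same route as the paper: both show $\psi$ is constant on cosets of $I$ via the identity $\psi(x+m)=\psi(x)\,\psi(1+x^{-1}m)$, using \cref{lem:primitive_implies_unit} to identify the vanishing locus of $\psi$ with the non-units, and then invoke primitivity to force $I=\{0\}$. The only cosmetic differences are that you extract $1+I\subseteq S$ from \cref{prop:iff} where the paper rederives it directly from the homogeneity of $I$, and you treat the remaining case by showing non-units stay non-units modulo $I$, which is the paper's symmetric argument on the pair $(x+m,-m)$ in contrapositive form.
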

\begin{proof}
    Let us assume that $I \neq R$. We claim that $\psi$ factors through $R \to R/I \to \C.$ More precisely, we will show that if $x \in R$ and $m \in I$ then 
    \[ \psi(x)=\psi(x+m).\]
    If $\psi(x)=\psi(x+m)=0$, then we are done. Otherwise, either $\psi(x) \neq 0$ or $\psi(x+m) \neq 0$. First, let us assume that $\psi(x) \neq 0$. Then $x \in R^{\times}$ by \cref{lem:primitive_implies_unit}. By definition, $\psi(1)=1$ and therefore $(0,1) \in E(P_{\psi})$. Since $I \neq R$, it follows that $1 \not \in I$. Furthermore, $I$ is an ideal which is also a homogeneous set, we conclude that $(-x^{-1}m, 1) \in E(P_{\psi})$ as well. Consequently $\psi(x^{-1}m+1)=1.$ This shows that $\psi(x)=\psi(x+m)$. 

    For the case $\psi(x+m) \neq 0$, by an identical argument applied to the pair $(y, y-m)$ with $y=x+m$, we also have $\psi(x)= \psi(x+m).$ We conclude that in all cases,  $\psi(x)=\psi(x+m)$. Since $\psi$ is primitive, it follows that $I= \{0 \},$ as desired. 
\end{proof}


\begin{cor} \label{cor:non_hom_set}
    If $\psi: R \to \C$ is primitive, then $P_{\psi}$ has no non-trivial homogeneous set which is also an ideal in $R.$
\end{cor}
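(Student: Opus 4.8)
The plan is to derive this statement as an immediate consequence of the preceding lemma, which already does all of the substantive work. That lemma asserts that when $\psi$ is primitive, any ideal $I$ of $R$ that happens to be a homogeneous set in $P_\psi$ must satisfy $I = \{0\}$ or $I = R$. So the only task remaining is to reconcile this dichotomy with the definition of a \emph{non-trivial} homogeneous set, which is purely a matter of checking size bounds.

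First I would recall that, by definition, a homogeneous set $H$ is non-trivial precisely when $2 \leq |H| < |V(P_\psi)| = |R|$. I would then argue by contradiction: suppose $I$ were an ideal of $R$ that is simultaneously a non-trivial homogeneous set in $P_\psi$. Since $\psi$ is primitive, the preceding lemma forces $I = \{0\}$ or $I = R$. But $|\{0\}| = 1 < 2$, so $\{0\}$ violates the lower bound in the definition of non-triviality, while $|R| \not< |R|$, so $R$ violates the upper bound. In either case we contradict the assumed non-triviality of $I$, and the corollary follows.

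I expect no genuine obstacle here: the entire content of the corollary is packaged into the lemma, and the corollary merely restates the ideal-theoretic dichotomy (a homogeneous ideal is forced to be $\{0\}$ or all of $R$) in the graph-theoretic language used to define primality. The only point requiring any care is the routine bookkeeping of the size bounds $2 \leq |I| < |R|$, to confirm explicitly that neither of the two surviving ideals $\{0\}$ and $R$ qualifies as non-trivial.
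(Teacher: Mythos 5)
Your proposal is correct and matches the paper's intent exactly: the corollary is stated immediately after the lemma forcing any homogeneous ideal to be $\{0\}$ or $R$, with the proof left implicit precisely because it reduces to the size bookkeeping $2 \leq |I| < |R|$ that you carry out. Nothing further is needed.
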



\begin{lem} \label{lem:anti_connnected}
    Suppose $\psi$ is primitive. Assume further that $P_{\psi}$ is connected but $P_{\psi}$ is not a complete graph. Then $P_{\psi}$ is anti-connected.
\end{lem}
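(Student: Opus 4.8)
The plan is to translate anti-connectedness into a purely additive statement and then exploit primitivity through \cref{lem:primitive_implies_unit}. Write $S=\ker(\psi)$ and $S^{c}=R\setminus(S\cup\{0\})$, so that the complement of $P_{\psi}$ is exactly the Cayley graph $\mathrm{Cay}(R,S^{c})$ (note $S^{c}=-S^{c}$ because $\psi$ is even). Since a Cayley graph on $(R,+)$ is connected if and only if its connection set generates the additive group, proving that $P_{\psi}$ is anti-connected amounts to showing that the additive subgroup $H:=\langle S^{c}\rangle$ equals $R$. Non-completeness of $P_{\psi}$ gives $S^{c}\neq\emptyset$, hence $H\neq\{0\}$.

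The first and main step is to prove that $H$ is in fact an ideal of $R$, not merely an additive subgroup. The key observation is that $S^{c}$ is stable under multiplication by $S$: if $u\in S$ and $t\in S^{c}$, then $u$ is a unit by \cref{lem:primitive_implies_unit}, so $ut\neq 0$, and $\psi(ut)=\psi(u)\psi(t)=\psi(t)\neq 1$, whence $ut\in S^{c}$. Now I bring in connectedness of $P_{\psi}$, which says precisely that $S$ generates $(R,+)$: every $r\in R$ can be written as $r=\sum_{i}\varepsilon_{i}u_{i}$ with $\varepsilon_{i}\in\{\pm1\}$ and $u_{i}\in S$. For any $t\in S^{c}$ this yields $rt=\sum_{i}\varepsilon_{i}(u_{i}t)$, a $\mathbb{Z}$-combination of elements of $S^{c}\subseteq H$, so $rt\in H$. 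As $H$ is additively generated by $S^{c}$, it follows that $rH\subseteq H$ for all $r\in R$, i.e.\ $H$ is an ideal.

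With $H$ an ideal, I argue by contradiction: suppose $H\neq R$. A proper ideal contains no units, so by \cref{lem:primitive_implies_unit} every $h\in H$ satisfies $\psi(h)=0$; thus $\psi\equiv 0$ on $H$. On the other hand, if $a\notin H$ then $a\neq 0$ and $a\notin S^{c}$ (as $S^{c}\subseteq H$), forcing $a\in S$ and $\psi(a)=1$. Hence $\psi$ is constant on every coset of $H$ (value $0$ on $H$, value $1$ on each other coset), so it descends to a well-defined $\overline{\psi}\colon R/H\to\mathbb{C}$ with $\psi=\overline{\psi}\circ\pi$; the multiplicativity of $\overline{\psi}$ and the normalizations $\overline{\psi}(0)=0$, $\overline{\psi}(\bar1)=1$ are inherited from $\psi$ via $\overline{\psi}(\bar a\bar b)=\psi(ab)=\psi(a)\psi(b)=\overline{\psi}(\bar a)\overline{\psi}(\bar b)$. (As a sanity check, the same computation shows $R/H$ is a field, since every nonzero class is the image of a unit.) Because $S^{c}\neq\emptyset$ the ideal $H$ is non-trivial, so this exhibits $\psi$ as factoring through a proper nonzero quotient, contradicting primitivity. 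Therefore $H=R$, and $P_{\psi}$ is anti-connected.

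I expect the ideal step to be the only real obstacle: anti-connectedness is an \emph{additive} property of $S^{c}$, yet the hypotheses that make it work (primitivity, together with the unit description of $\psi$) are \emph{multiplicative}, so the argument must convert additive generation of $R$ by $S$ into multiplicative closure of $H$. Everything afterward is bookkeeping, and I note that all three hypotheses are genuinely used: connectedness in the ideal step, non-completeness to guarantee $H\neq\{0\}$, and primitivity for the final contradiction.
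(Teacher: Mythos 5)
Your proof is correct and follows essentially the same route as the paper: both show that the additive group $H=\langle S^{c}\rangle$ is an ideal by combining multiplicative stability of $S^{c}$ under $S$ with the additive generation of $R$ by $S$ (connectedness), and both derive a contradiction with primitivity by showing $\psi$ vanishes on a proper $H$ and equals $1$ off it, hence factors through $R/H$. The only difference is cosmetic: the paper takes a detour proving $R$ would be local with maximal ideal $H$ before reaching the same factoring contradiction, a step you correctly observe is unnecessary.
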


\begin{proof}
    Let $S= \ker(\psi)$ and $S^c= R \setminus S$. Then the complement of $P_{\psi}$ is $\Gamma(R, S^c \setminus \{0 \}).$ This complement is connected if and only if $S^c$ generates $(R,+)$ as an abelian group. We first claim that the abelian group $I = \langle S^c \rangle$ generated by $S^c$ is an ideal in $R.$ In fact, let $s_c \in S^c$ and $s \in S.$ Then 
    \[ \psi(ss_c)= \psi(s)\psi(s_c) = \psi(s_c) \neq 1.\]
    This shows that $ss_c \in S^c.$ Now, let $r \in R$ be an arbitrary element. Since $S$ generates $R$ as an abelian group, we can write 
    $ r = \sum_{i=1}^t m_i s_i,$ where $s_i \in S$ and $m_i \in \Z.$ We then have 
    \[ rs_c = \sum_{i=1}^t m_i (s_i s_c) \in I. \]
    Suppose that $y \in I.$ By definition, we can write $y = \sum_{i=1}^{p} n_i (s_c)_i$  where $n_i \in \Z$ and $(s_c)_i \in S^c.$ We then have 
    \[ ry = \sum_{i=1}^p n_i (r (s_c)_i) \in I.\]
    Since this is true for all $r \in R$ and $y \in I$, we conclude that $I$ is an ideal in $R.$
    
    We claim that $I=R.$ Suppose to the contrary that $I \neq R$. Let $m$ be an arbitrary maximal ideal in $R$.  If $x \in m$ then $\psi(x)=0$ by \cref{lem:primitive_implies_unit}. Consequently, $x \in S^c$ and hence $x \in I.$ Since this is true for every $x$, we conclude that $m \subseteq I.$ We conclude that $I$ is the only maximal ideal in $R$. In other words, $R$ is a local ring and its maximal ideal is $I.$ By \cref{lem:primitive_implies_unit}, we must have $\psi(I)=0$. We note that since $P_{\psi}$ is not complete, $S \neq R \setminus \{0 \}$. Therefore,  $S^c$ contains a non-zero element. Consequently, $I \neq 0$. Additionally, because $\psi$ is primitive, we can find $r \in R \setminus I $ such that $\psi(r) \neq 1$ (otherwise, we have  $\psi(I)=0$ and $\psi(x)=1$ for each $x \in S = R \setminus I.$ In other words, $\psi$ factors through the nontrivial quotient $R \to R/I.$) This would be impossible since we then have $r \in S^c \subseteq I.$
\end{proof}

Combining \cref{cor:non_hom_set}, \cref{lem:anti_connnected} and \cref{thm:prime_implies_ideals}, we have the following proposition which is the converse of Proposition \ref{prop:not_primitive_implies_not_prime}.

\begin{thm} \label{prop:criterior}
    Let $\psi: R \to \C$ be a primitive multiplicative function. Assume further that $P_{\psi}$ is connected and $P_{\psi}$ is not complete. Then $P_{\psi}$ is prime.
\end{thm}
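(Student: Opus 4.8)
The plan is to assemble the final theorem almost entirely from the machinery already developed in this subsection, so the proof is short and structural rather than computational. The statement to prove is: if $\psi\colon R\to\C$ is a primitive multiplicative function, $P_\psi$ is connected, and $P_\psi$ is not complete, then $P_\psi$ is prime. The strategy is to verify the hypotheses of \cref{thm:prime_implies_ideals} and then contradict \cref{cor:non_hom_set}.

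First I would record the two standing hypotheses that let us invoke the earlier theorems. Connectedness of $P_\psi$ is assumed directly. Anti-connectedness is exactly the content of \cref{lem:anti_connnected}, whose hypotheses (primitivity, connectedness, not complete) match ours verbatim; so $P_\psi$ is anti-connected. I would also note, via \cref{lem:primitive_implies_unit}, that since $\psi$ is primitive we have $\ker(\psi)\subseteq R^\times$, so that $S=\ker(\psi)$ is genuinely a subgroup of units and the framework of \cref{thm:prime_implies_ideals} (which requires $S\subseteq R^\times$) applies.

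The core step is a proof by contradiction. Suppose $P_\psi$ is \emph{not} prime. Then $P_\psi$ is a connected, anti-connected Cayley graph over $(R,+)$ with $S\subseteq R^\times$ that fails to be prime, so \cref{thm:prime_implies_ideals} produces a non-trivial ideal $I$ of $R$ that is simultaneously a homogeneous set of $P_\psi$. But \cref{cor:non_hom_set} asserts precisely that a primitive $\psi$ admits \emph{no} non-trivial homogeneous set that is also an ideal. These two conclusions are contradictory, so the assumption that $P_\psi$ is not prime is untenable, and $P_\psi$ must be prime.

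I do not anticipate a genuine obstacle here, since every nontrivial ingredient has been proved earlier; the only thing to be careful about is confirming that the hypotheses line up exactly. In particular I would double-check that ``$S$ is a subgroup of $R^\times$'' is available (it comes from primitivity plus \cref{lem:primitive_implies_unit}, rather than being assumed outright as in the earlier ring-theoretic results), and that ``not complete'' together with connectedness is what \cref{lem:anti_connnected} needs to deliver anti-connectedness. Once those are in place, the theorem follows immediately by combining \cref{cor:non_hom_set}, \cref{lem:anti_connnected}, and \cref{thm:prime_implies_ideals}, exactly as the sentence preceding the statement advertises.
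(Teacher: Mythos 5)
Your proof is correct and is exactly the paper's argument: the paper proves this theorem precisely by combining \cref{lem:anti_connnected} (to get anti-connectedness), \cref{thm:prime_implies_ideals} (to extract a non-trivial homogeneous ideal from non-primality), and \cref{cor:non_hom_set} (to rule such an ideal out). Your extra care in checking via \cref{lem:primitive_implies_unit} that $S=\ker(\psi)$ is a subgroup of $R^{\times}$ matches the remark the paper makes immediately after defining $P_{\psi}$, so nothing is missing.
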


\begin{rem} \label{rem:counter_example}
    The condition that $P_{\psi}$ is connected is necessary for \cref{prop:criterior}. For example, let $R=\Z/2 \times \Z/2$ and $\psi: R \to \C$ be the function such that $\psi((a,b))=0$ if $(a,b) \neq (1,1)$ and $\psi((1,1))=1.$ Then, $\psi$ is primitive but $P_{\psi}$ is not connected. In fact, $X_1 = \{(0,0), (1,1) \}$ and $X_2 = \{(1,0), (0,1)\}$ are two connected components of $R.$ We refer the reader to \cref{sec:unitary} for some further discussions for this type of graph.
\end{rem}

\begin{rem}
Observe that if $\psi: R \to \C$ is a primitive multiplicative function such that $P_{\psi}$ is complete then $R$ is a field (note that a complete graph $K_n$ is not prime if $n \geq 3$). Indeed, if $P_{\psi}$ is complete then $\ker(\psi) = R \setminus \{0\}.$ By \cref{lem:primitive_implies_unit}, we conclude that $R^{\times} = R \setminus \{0 \}.$ This implies that $R$ is a field. 

\end{rem}

\subsection{Unitary Cayley graphs} \label{sec:unitary}
We continue with some discussions on the unitary graph of a finite commutative ring. We first introduce this concept (see \cite{unitary, klotz2007some} for some further studies regarding it.) 

\begin{defn}
Let $\one: R \to \mathbb{C}$ be the principal multiplicative function defined by 
   \[ \one(r) = \begin{cases}
  1  & \text{if } r \in R^{\times}, \\
  0  & \text{else. } \end{cases} \] 
The unitary graph associated with $R$ is $\text{Cay}(R, \ker(\one))=\text{Cay}(R, R^{\times})$. We will denote this graph by $X_R.$
\end{defn}
\begin{question}
    When is $X_R$  prime? 
\end{question}
To answer this question, we recall the following definition of the tensor product of two graphs. In the literature, other names are used for this concept including direct product and Kronecker product (see \cite[Page 36]{hammack2011handbook}).
\begin{defn}
    Let $G,H$ be two graphs. The tensor product $G\times H$ of $G$ and $H$ is the graph with the following data:
    \begin{enumerate}
    \item The vertex set of $G \times H$ is the Cartesian product $V(G) \times V(H)$; and
    \item Two vertices $(g,h)$ and $(g',h')$ are adjacent in $G \times H$ if and only if $(g,g') \in E(G)$ and $(h, h') \in E(H).$
\end{enumerate}
\end{defn}
We remark that if $R=R_1 \times R_2$, then an element $r=(r_1, r_2) \in R$ is a unit if and only if $r_1, r_2$ are both units. From this observation, we see that $X_{R} \cong X_{R_1} \times X_{R_2}$.

Let $R^{\s}$ be the semisimplification of $R$, namely 
\[ R^{\s} = R/Rad(R) .\] 
where $\Rad(R)$ is the Jacobson radical of $R$ as defined in \cref{defn:jacobson}. There is a canonical ring map 
\[ \Phi: R \to R^{\s} .\] 

\begin{prop} \label{prop:radical}
The map $\Phi$ has the following properties. 
\begin{enumerate}
\item $\Phi$ is surjective. 
\item Let $r \in R$. Then $r$ is a unit in $R$ if and only if $\Phi(r)$ is a unit in $R^{\s}.$
\end{enumerate}
\end{prop}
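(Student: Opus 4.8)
Part (1) is immediate: by construction $\Phi$ is the canonical projection of $R$ onto the quotient ring $R^{\s} = R/\Rad(R)$, and every quotient map of rings is surjective, so nothing needs to be done here.

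For part (2) I would treat the two implications separately, since one is formal and the other carries all the content. The forward implication is automatic from $\Phi$ being a unital ring homomorphism: if $r \in R^{\times}$ with inverse $r^{-1}$, then applying $\Phi$ to the relation $r r^{-1} = 1$ yields $\Phi(r)\,\Phi(r^{-1}) = \Phi(1) = 1$, so $\Phi(r)$ is a unit in $R^{\s}$. This direction needs no hypothesis on $\Rad(R)$ at all.

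The substance lies in the reverse implication, which is exactly the statement that units lift along the quotient by the Jacobson radical. Assuming $\Phi(r)$ is a unit in $R^{\s}$, the plan is to pick $s \in R$ whose image inverts $\Phi(r)$, so that $\Phi(rs) = 1$, i.e.\ $rs - 1 \in \Rad(R)$. Writing $j = rs - 1 \in \Rad(R)$ and using that $\Rad(R)$ is an ideal (hence $-j \in \Rad(R)$), condition (3) of \cref{defn:jacobson} guarantees that $1 - (-j) = 1 + j = rs$ is a unit of $R$. Then, choosing $t \in R$ with $(rs)t = 1$ and invoking commutativity of $R$, I get $r \cdot (st) = 1$, so $r \in R^{\times}$, completing the argument.

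There is no genuine obstacle here: the only non-formal ingredient is the standard fact, recorded as condition (3) of \cref{defn:jacobson}, that $1 + j$ is invertible for every $j \in \Rad(R)$, and the reverse implication of (2) is its direct application. If one prefers, condition (2) of \cref{defn:jacobson} (with the choice $s = -1$) gives the invertibility of $1 + j$ equally well, so the choice of which characterization of $\Rad(R)$ to cite is the only point of flexibility in the write-up.
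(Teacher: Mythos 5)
Your proposal is correct and matches the paper's proof essentially step for step: surjectivity is immediate from $\Phi$ being the quotient map, the forward direction is the formal fact that ring homomorphisms preserve units, and the reverse direction lifts the inverse to get $rs-1\in\Rad(R)$ and then invokes the characterization of the Jacobson radical (condition (3) of \cref{defn:jacobson}, exactly as the paper does via $xy=1+(xy-1)$) to conclude $rs$, hence $r$, is a unit. Nothing to change.
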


\begin{proof} Clearly $\Phi$ surjective. Now let $x$ be an element in $R$. Suppose that $\bar{x}=\Phi(x)$
is a unit in $R^{\s}.$ By definition, there exists $y \in R$ such that $xy-1 \in \Rad(R).$ By the definition of the Jacobson radical, this implies that 
\[ xy=1+(xy-1) ,\] 
is a unit in $R.$ Consequently, $x$ is a unit in $R.$ The converse statement is clear since $\Phi$ is a ring homomorphism.
\end{proof}

By Proposition \ref{prop:radical}, we see that $\one$ factors through $R \to R/ \Rad(R).$ Therefore, $\one$ is not primitive and hence by \cref{prop:not_primitive_implies_not_prime}, $X_R$ is not prime unless $R=R^{\s}$, namely, $R$ is semisimple. From now on, we will assume that $R$ is semisimple. By the structure theorem, it follows that $R$ is a product of fields $R = \prod_{i=1}^d k_i$ where $k_i$ are fields of size $q_i$. We have $R^{\times} = \prod_{i=1}^d k_i^{\times}.$ From this decomposition, we see that $X_{R} \cong  \prod_{i=1}^d X_{k_i} = \prod K_{q_i}$ where $K_n$ is a complete graph on $n$ nodes. Therefore, our question can be reduced to the following question. 
\begin{question}
    Let $n_1, \ldots, n_d$ be positive integers. When is $\prod_{i=1}^d K_{n_i}$ prime? 
\end{question}

We remark that the vertex set of this graph is the set of $d$-tuple $(s_1, s_2, \ldots, s_d)$ where $0 \leq s_i \leq n_i-1.$ Two vertices $(s_1, s_2, \ldots, s_d)$ and $(t_1, t_2, \ldots, t_d)$ are connected if and only if $s_i \neq t_i$ for all $1 \leq i \leq d.$  We remark that the complement of $\prod_{i=1}^{d} K_{n_i}$ is always  connected if $d \geq 2$. In other words, $\prod_{i=1}^d K_{n_i}$ is anti-connected if $d \geq 2.$

\begin{rem}
If $d=1$ then $X_{R} \cong K_{q_1}$ and we can see that $K_{q_i}$ is not prime unless $q_i=2$. 
\end{rem}
From now on, we will assume that $d>1.$ From \cref{rem:counter_example}, we know that $K_2 \times K_2$ is not connected. Additionally, we know that the tensor product behaves well with respect to the disjoint union of graphs (see \cite[Page 54]{hammack2011handbook}). Specifically, if $G, H_1, H_2$ are graphs then 
\[ G \times (H_1 + H_2) \cong (G \times H_1) + (G \times H_2) .\]
Here $H_1+H_2$ is the disjoint union of $H_1$ and $H_2.$ From this, we can see that if there is more than one $n_i=2$, the product $\prod_{i=1}^{d} K_{n_i}$ is not connected.

\begin{lem} \label{lem:connectedness}
Suppose that $d \geq 2.$    The graph $\prod_{i=1}^d K_{n_i}$ is connected if and only if there is at most one $n_i=2.$
\end{lem}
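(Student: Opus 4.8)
The plan is to reduce connectivity of the tensor product to a statement about walk lengths in the individual factors, in the spirit of Weichsel's theorem. The key observation is that a walk of length $\ell$ in $\prod_{i=1}^d K_{n_i}$ is nothing more than a $d$-tuple of walks of the \emph{same} length $\ell$, one in each $K_{n_i}$: indeed, two vertices of the product are adjacent precisely when they differ in every coordinate, so a walk $v^{(0)}, v^{(1)}, \dots, v^{(\ell)}$ in the product projects coordinatewise to walks of length $\ell$ in the factors, and conversely any $d$ walks of a common length $\ell$ assemble into a single walk in the product. Thus I would first record the criterion: two vertices $s=(s_1,\dots,s_d)$ and $t=(t_1,\dots,t_d)$ lie in the same connected component of $\prod_{i=1}^d K_{n_i}$ if and only if there is one integer $\ell\ge 0$ such that, for every $i$, the graph $K_{n_i}$ admits a walk of length $\ell$ from $s_i$ to $t_i$.

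The next step is to determine, for each factor, which walk lengths are realizable between a prescribed pair of vertices. For $K_n$ with $n\ge 3$, which contains a triangle and is hence non-bipartite, a short padding argument shows that between any two vertices (equal or distinct) there is a walk of every length $\ell\ge 2$: one uses a back-and-forth step to add $2$ to a length and a triangle to switch parity. For $K_2$, which is bipartite, the situation is rigid: a walk between two vertices exists exactly for those $\ell$ whose parity is even if the endpoints coincide and odd if they differ. These facts are elementary and I would not dwell on them.

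With the criterion and these length sets in hand, the hypothesis ``at most one $n_i=2$'' is exactly what guarantees a common admissible length. For the nontrivial direction, given arbitrary $s,t$ I would exhibit such an $\ell\in\{2,3\}$: if no factor is $K_2$, then $\ell=2$ works simultaneously in every factor; if exactly one factor, say $K_{n_d}=K_2$, is bipartite, I take $\ell=2$ when $s_d=t_d$ and $\ell=3$ when $s_d\ne t_d$, so that the parity demanded by the $K_2$ factor is met while every other factor, having $n_i\ge 3$, admits that length because it is at least $2$. This produces a walk from $s$ to $t$, so the product is connected.

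For the converse, suppose two factors, say $K_{n_a}$ and $K_{n_b}$ with $n_a=n_b=2$, are both copies of $K_2$. Choosing $s,t$ that agree in every coordinate except $a$, where $s_a\ne t_a$, forces any common length $\ell$ to be simultaneously odd (for factor $a$) and even (for factor $b$), which is impossible; hence $s$ and $t$ lie in different components and the product is disconnected. This direction also follows from the distributivity of the tensor product over disjoint unions already noted, together with the disconnectedness of $K_2\times K_2$. The only real obstacle is isolating and proving the walk-length criterion cleanly; once it is available, the role of the hypothesis is transparent, since it is precisely the condition that a single length can simultaneously satisfy the parity constraint coming from the unique bipartite factor and the ``all lengths $\ge 2$'' freedom enjoyed by the others.
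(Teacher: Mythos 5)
Your proof is correct, but it is organized around a different key lemma than the paper's. The paper argues bare-hands: for the ``if'' direction it exhibits explicit walks of length $2$ or $3$ through an intermediate vertex $u$ with $u_i \notin \{s_i,t_i\}$ in each coordinate, splitting into the cases ``all $n_i>2$'' and ``exactly one $n_i=2$'' (in the latter case first fixing the $K_2$-coordinate and then appealing to the first case); for the ``only if'' direction it uses the distributivity of the tensor product over disjoint unions together with the disconnectedness of $K_2\times K_2$, both recorded just before the lemma. You instead isolate a Weichsel-style criterion --- two vertices of a tensor product lie in the same component iff there is a \emph{common} length $\ell$ realized by walks in every factor --- and then classify realizable walk lengths in each $K_{n_i}$: all $\ell\ge 2$ when $n_i\ge 3$ (non-bipartite), and a parity constraint when $n_i=2$. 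Your concrete choices $\ell\in\{2,3\}$ reproduce exactly the paper's walks, and your parity-clash converse is a clean substitute for (and you also note the equivalence with) the distributivity argument. What your framing buys is conceptual clarity and generality: it makes the role of bipartiteness explicit and essentially re-proves the special case of Weichsel's theorem, which the paper only mentions afterwards in a remark citing \cite[Corollary 5.10]{hammack2011handbook}; what the paper's version buys is a shorter, fully self-contained argument with no auxiliary criterion to set up. One small shared caveat: both proofs implicitly assume every $n_i\ge 2$ (your ``$\ell=2$ works in every factor'' fails for $K_1$, as does the paper's choice of $u_i\notin\{s_i,t_i\}$); this is harmless in context, since the $n_i$ arise as cardinalities of finite fields.
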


\begin{proof}
    The if part has been proved above. Let us prove the ``only if'' part. Let $(s_1, s_2, \ldots, s_d)$ and $(t_1, t_2, \ldots, t_d)$ be elements of $\prod_{i=1}^d K_{n_i}.$ If $n_i>2$ for all $i$, we can find $(u_1, u_2, \ldots, u_d)$ such that 
    \[ u_i \not \in \{s_i, t_i \}, \forall 1 \leq i \leq d. \]
    By definition, $(u_1, u_2, \ldots, u_d)$ is connected to both $(s_1, s_2, \ldots, s_d)$ and $(t_1, t_2, \ldots, t_d)$. Thus, we find a path 
    \[ (s_1, s_2, \ldots, s_d) \to (u_1, u_2, \ldots, u_d) \to (t_1, t_2, \ldots, t_d). \]
    Now, suppose that one of the $n_i$ is $2.$ Without loss of generality, we can assume that $n_1=2.$ Again, we can find $(u_1, u_2, \ldots, u_d)$ such that 
        \[ u_i \not \in \{s_i, t_i \}, \forall 2 \leq i \leq d. \]
    If $s_1 = t_1$, we can find $u_1$ such that $u_1 \neq t_1$. Then we have the following path: 
        \[ (s_1, s_2, \ldots, s_d) \to (u_1, u_2, \ldots, u_d) \to (t_1, t_2, \ldots, t_d). \]
    If $s_1 \neq t_1$, we have the following path 
            \[ (s_1, s_2, \ldots, s_d) \to (t_1, u_2, \ldots, u_d). \]
    By the previous case, there is a path from $(t_1, u_2, \ldots, u_d) \to (t_1, t_2, \ldots, t_d).$ This shows that there is a path from      $(s_1, s_2, \ldots, s_d)$ to $(t_1, t_2, \ldots, t_d).$ 
\end{proof}

\begin{rem}
\cref{lem:connectedness} can also be deduced from \cite[Corollary 5.10]{hammack2011handbook} which asserts that a tensor product of connected nontrivial graphs is connected if and only at most one of the factors is bipartite. In our case, $K_{n_i}$ is connected. It is bipartite if and only if $n_i=2.$
    
\end{rem}

Now back to our problem of whether $X_{R}$ is prime where $R = \prod_{i=1}^{d} k_i.$ As we explained above, we have 
\[ X_{R} \cong \prod_{i=1}^d X_{k_i} \cong \prod_{i=1}^d K_{q_i}, \]
where $q_i=|k_i|.$ Clearly, the multiplicative function $\one: R \to \C$ is primitive. We also know that $X_{R}$ is anti-connected. Therefore, by Proposition \ref{prop:criterior}, we conclude $X_{R}$ is prime if and only if it is connected. By the above lemma, this happens if and only if at most one $q_i=2.$ In summary, we have proved the following.
\begin{thm}
    For a commutative ring $R$, the graph $X_{R}$ is prime if and only if $R =\F_2$ or $R \cong \prod_{i=1}^d k_i$ where $d \geq 2$ and $k_i$ are fields such that there is at most one $i$ such that $|k_i|=2.$
\end{thm}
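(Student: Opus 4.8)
The plan is to assemble the final classification directly from the machinery already developed, so the proof is essentially a bookkeeping argument that threads together several earlier results. First I would recall that the unitary graph is governed by the principal multiplicative function $\one$, and that by \cref{prop:radical} this function factors through the semisimplification map $R \to R^{\s} = R/\Rad(R)$. Hence by \cref{prop:not_primitive_implies_not_prime}, if $R$ is not semisimple then $\one$ is non-primitive and $X_R$ fails to be prime; this disposes of all non-semisimple rings and lets me assume $R = R^{\s}$ for the remainder. By the structure theorem for finite semisimple commutative rings, I may then write $R \cong \prod_{i=1}^d k_i$ with each $k_i$ a finite field of size $q_i$, and using that units in a product are exactly tuples of units I identify $X_R \cong \prod_{i=1}^d X_{k_i} \cong \prod_{i=1}^d K_{q_i}$.

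Having reduced to tensor products of complete graphs, I would split on $d$. The case $d=1$ is immediate: $X_R \cong K_{q_1}$, and a complete graph is prime precisely when $q_1 = 2$ (since $K_n$ for $n \geq 3$ has every two-element vertex set as a non-trivial homogeneous set), which yields the $R = \F_2$ branch of the statement. For $d \geq 2$ the key point is that $\one\colon R \to \C$ is primitive (it does not factor through any non-trivial quotient, as $R$ is already semisimple), and that $\prod_{i=1}^d K_{n_i}$ is anti-connected whenever $d \geq 2$, a fact recorded just before \cref{lem:connectedness}. I would then invoke \cref{prop:criterior}: for a primitive $\psi$ with $P_\psi$ connected and not complete, $P_\psi$ is prime. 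Since $\one$ is primitive and $X_R$ is not complete for $d \geq 2$, primality of $X_R$ is therefore equivalent to its connectedness.

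The final ingredient is \cref{lem:connectedness}, which characterizes connectedness of $\prod_{i=1}^d K_{n_i}$ (for $d \geq 2$) as the condition that at most one factor has $n_i = 2$. Combining this with the previous paragraph gives that, for $d \geq 2$, $X_R$ is prime if and only if at most one $q_i$ equals $2$. Merging the $d=1$ and $d \geq 2$ analyses produces exactly the claimed dichotomy: $R = \F_2$, or $R \cong \prod_{i=1}^d k_i$ with $d \geq 2$ and at most one field of size $2$.

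I do not anticipate a genuine obstacle here, since every nontrivial implication has been isolated into a prior lemma; the only point requiring care is the logical matching of hypotheses when applying \cref{prop:criterior}. Specifically I must verify that the excluded ``complete'' case does not swallow a legitimate prime graph — this is why the $d=1$, $q_1=2$ case must be handled separately (there $K_2$ is both complete and prime, so \cref{prop:criterior} does not apply and the direct observation that $K_2$ is prime is needed). Ensuring that the ``not complete'' and ``connected'' hypotheses are simultaneously met for $d \geq 2$, and that the boundary cases land on the correct side of the dichotomy, is the one place where I would be most careful, but it is a matter of precise case-tracking rather than a substantive difficulty.
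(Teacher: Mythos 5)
Your proposal is correct and follows essentially the same route as the paper's own proof: reduce to the semisimple case via \cref{prop:radical} and \cref{prop:not_primitive_implies_not_prime}, use the structure theorem to identify $X_R \cong \prod_{i=1}^d K_{q_i}$, dispose of $d=1$ by inspecting $K_{q_1}$ directly, and for $d \geq 2$ combine primitivity of $\one$ and anti-connectedness with \cref{prop:criterior} and \cref{lem:connectedness}. Your explicit handling of the boundary case $K_2$ (complete yet prime, hence outside the scope of \cref{prop:criterior}) only makes precise what the paper treats implicitly, so the substance is identical.
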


\bibliographystyle{amsplain}
\bibliography{references.bib}

\providecommand{\bysame}{\leavevmode\hbox to3em{\hrulefill}\thinspace}
\providecommand{\MR}{\relax\ifhmode\unskip\space\fi MR }
\providecommand{\MRhref}[2]{%
  \href{http://www.ams.org/mathscinet-getitem?mr=#1}{#2}
}
\providecommand{\href}[2]{#2}
\begin{thebibliography}{10}

\bibitem{unitary}
Reza Akhtar, Megan Boggess, Tiffany Jackson-Henderson, Isidora Jim{\'e}nez,
  Rachel Karpman, Amanda Kinzel, and Dan Pritikin, \emph{On the unitary
  {Cayley} graph of a finite ring}, Electron. J. Combin. \textbf{16} (2009),
  no.~1, Research Paper 117, 13 pages.

\bibitem{alon2001ramsey}
Noga Alon, J{\'a}nos Pach, and J{\'o}zsef Solymosi, \emph{Ramsey-type theorems
  with forbidden subgraphs}, Combinatorica \textbf{21} (2001), no.~2, 155--170.

\bibitem{banerjee2016chimera}
Tanmoy Banerjee, Partha~Sharathi Dutta, Anna Zakharova, and Eckehard
  Sch{\"o}ll, \emph{Chimera patterns induced by distance-dependent power-law
  coupling in ecological networks}, Physical Review E \textbf{94} (2016),
  no.~3, 032206.

\bibitem{barber2021recognizing}
Rachel~V Barber, \emph{Recognizing algebraically constructed graphs which are
  wreath products}, ProQuest LLC, Ann Arbor, MI, 2021, PhD Thesis, Mississippi
  State University.

\bibitem{bavsic2015polynomials}
Milan Ba{\v{s}}i{\'c} and Aleksandar Ili{\'c}, \emph{Polynomials of unitary
  {Cayley} graphs}, Filomat \textbf{29} (2015), no.~9, 2079--2086.

\bibitem{bassett2018nature}
Danielle~S Bassett, Perry Zurn, and Joshua~I Gold, \emph{On the nature and use
  of models in network neuroscience}, Nature Reviews Neuroscience \textbf{19}
  (2018), no.~9, 566--578.

\bibitem{berge1961deren}
Claude Berge, \emph{F{\"a}rbung von {{G}}raphen, deren s{\"a}mtliche bzw. deren
  ungerade {{K}}reise starr sind}, Wiss. Z. Martin-Luther-Univ.
  Halle-Wittenberg Math.-Natur. Reihe \textbf{10} (1961), no.~114, 88.

\bibitem{boccaletti2014structure}
Stefano Boccaletti, Ginestra Bianconi, Regino Criado, Charo~I Del~Genio,
  Jes{\'u}s G{\'o}mez-Gardenes, Miguel Romance, Irene Sendina-Nadal, Zhen Wang,
  and Massimiliano Zanin, \emph{The structure and dynamics of multilayer
  networks}, Physics reports \textbf{544} (2014), no.~1, 1--122.

\bibitem{boccara2010modeling}
Nino Boccara, \emph{Modeling complex systems}, Springer Science \& Business
  Media, 2010.

\bibitem{bonnet2022twin}
{\'E}douard Bonnet, Colin Geniet, Romain Tessera, and St{\'e}phan Thomass{\'e},
  \emph{Twin-width vii: groups}, arXiv preprint arXiv:2204.12330 (2022).

\bibitem{bonnet2021twin}
{\'E}douard Bonnet, Eun~Jung Kim, St{\'e}phan Thomass{\'e}, and R{\'e}mi
  Watrigant, \emph{Twin-width i: tractable fo model checking}, ACM Journal of
  the ACM (JACM) \textbf{69} (2021), no.~1, 1--46.

\bibitem{bonnet2021twin2}
{\'E}douard Bonnet, Jaroslav Ne{\v{s}}et{\v{r}}il, Patrice~Ossona de~Mendez,
  Sebastian Siebertz, and St{\'e}phan Thomass{\'e}, \emph{Twin-width and
  permutations}, arXiv preprint arXiv:2102.06880 (2021).

\bibitem{brandstadt1999graph}
Andreas Brandst{\"a}dt, Van~Bang Le, and Jeremy~P Spinrad, \emph{Graph classes:
  a survey}, SIAM Monographs on Discrete Mathematics and Applications, 1999.

\bibitem{joined_union2}
Domingos~M. Cardoso, Maria Aguieiras~A. de~Freitas, Enide~Andrade Martins, and
  Maria Robbiano, \emph{Spectra of graphs obtained by a generalization of the
  join graph operation}, Discrete Math. \textbf{313} (2013), no.~5, 733--741.

\bibitem{cayley1878desiderata}
Professor Cayley, \emph{Desiderata and suggestions: No. 2. {The Theory} of
  groups: graphical representation}, American Journal of Mathematics \textbf{1}
  (1878), no.~2, 174--176.

\bibitem{chudnovsky2014erdos}
Maria Chudnovsky, \emph{The {E}rd{{\H{o}}}s--{H}ajnal conjecture -- a survey},
  Journal of Graph Theory \textbf{75} (2014), no.~2, 178--190.

\bibitem{chudnovsky2006strong}
Maria Chudnovsky, Neil Robertson, Paul Seymour, and Robin Thomas, \emph{The
  strong perfect graph theorem}, Annals of mathematics (2006), 51--229.

\bibitem{chudnovsky2008erdHos}
Maria Chudnovsky and Shmuel Safra, \emph{The {E}rd{\H{o}}s--{H}ajnal conjecture
  for bull-free graphs}, Journal of Combinatorial Theory, Series B \textbf{98}
  (2008), no.~6, 1301--1310.

\bibitem{cournier1993quelques}
Alain Cournier, \emph{Sur quelques algorithmes de d{\'e}composition de
  graphes}, Ph.D. thesis, Montpellier 2, 1993.

\bibitem{cramer2016isoperimetric}
Kevin Cramer, Mike Krebs, Nicole Shabazi, Anthony Shaheen, and Edward
  Voskanian, \emph{The isoperimetric and {Kazhdan} constants associated to a
  {Paley} graph}, Involve, a Journal of Mathematics \textbf{9} (2016), no.~2,
  293--306.

\bibitem{CM1}
Jacqueline Doan, J{\'a}n Min{\'a}{\v{c}}, Lyle Muller, Tung~T Nguyen, and
  Federico~W Pasini, \emph{Joins of circulant matrices}, Linear Algebra and its
  Applications \textbf{650} (2022), 190--209.

\bibitem{CM1_b}
Jacqueline Doan, Jan Min{\'a}{\v{c}}, Lyle Muller, Tung~T Nguyen, and
  Federico~W Pasini, \emph{On the spectrum of the joins of normal matrices and
  applications}, arXiv preprint arXiv:2207.04181 (2022).

\bibitem{dorfler2013synchronization}
F.~D\"orfler, M.~Chertkov, and F.~Bullo, \emph{Synchronization in complex
  oscillator networks and smart grids}, Proc. Natl. Acad. Sci. USA \textbf{110}
  (2013), no.~6, 2005--2010.

\bibitem{erdos1989ramsey}
Paul Erd{\H{o}}s and Andr{\'a}s Hajnal, \emph{Ramsey-type theorems}, Discrete
  Applied Mathematics \textbf{25} (1989), no.~1-2, 37--52.

\bibitem{gallai1967transitiv}
Tibor Gallai, \emph{Transitiv orientierbare graphen}, Acta Mathematica
  Hungarica \textbf{18} (1967), no.~1-2, 25--66.

\bibitem{gao2011robustness}
J.~Gao, S.~V. Buldyrev, S.~Havlin, and H.~E. Stanley, \emph{Robustness of a
  network of networks}, Physical Review Letters \textbf{107} (2011), no.~19,
  195701.

\bibitem{gao2012networks}
J.~Gao, S.~V. Buldyrev, H.~E. Stanley, and S.~Havlin, \emph{Networks formed
  from interdependent networks}, Nature physics \textbf{8} (2012), no.~1,
  40--48.

\bibitem{ghinelli2011codes}
Dina Ghinelli and Jennifer~D Key, \emph{Codes from incidence matrices and line
  graphs of {Paley} graphs}, Advances in Mathematics of Communications
  \textbf{5} (2011), no.~1, 93.

\bibitem{grotschel1984polynomial}
Martin Gr{\"o}tschel, L{\'a}szl{\'o} Lov{\'a}sz, and Alexander Schrijver,
  \emph{Polynomial algorithms for perfect graphs}, North-Holland mathematics
  studies, vol.~88, Elsevier, 1984, pp.~325--356.

\bibitem{hammack2011handbook}
Richard Hammack, Wilfried Imrich, and Sandi Klav{\v{z}}ar, \emph{Handbook of
  product graphs}, CRC press, 2011.

\bibitem{jain2023composed}
Priya~B Jain, Tung~T Nguyen, J{\'a}n Min{\'a}{\v{c}}, Lyle~E Muller, and
  Roberto~C Budzinski, \emph{Composed solutions of synchronized patterns in
  multiplex networks of kuramoto oscillators}, Chaos: An Interdisciplinary
  Journal of Nonlinear Science \textbf{33} (2023), no.~10.

\bibitem{javelle2014cryptographie}
J{\'e}r{\^o}me Javelle, \emph{Cryptographie quantique: Protocoles et graphes},
  Ph.D. thesis, Universit{\'e} de Grenoble, 2014.

\bibitem{kenett2015networks}
D.~Y. Kenett, M.~Perc, and S.~Boccaletti, \emph{Networks of networks--an
  introduction}, Chaos, Solitons \& Fractals \textbf{80} (2015), 1--6.

\bibitem{kinney2005modeling}
Ryan Kinney, Paolo Crucitti, Reka Albert, and Vito Latora, \emph{Modeling
  cascading failures in the north american power grid}, The European Physical
  Journal B-Condensed Matter and Complex Systems \textbf{46} (2005), no.~1,
  101--107.

\bibitem{kivela2014multilayer}
M.~Kivel{\"a}, A.~Arenas, M.~Barthelemy, J.~P. Gleeson, Y.~Moreno, and M.~A.
  Porter, \emph{Multilayer networks}, Journal of Complex Networks \textbf{2}
  (2014), no.~3, 203--271.

\bibitem{klotz2007some}
Walter Klotz and Torsten Sander, \emph{Some properties of unitary {Cayley}
  graphs}, The electronic journal of combinatorics (2007), R45--R45.

\bibitem{krebs2011expander}
Mike Krebs and Anthony Shaheen, \emph{Expander families and cayley graphs: a
  beginner's guide}, Oxford University Press, 2011.

\bibitem{lovasz1972normal}
L{\'a}szl{\'o} Lov{\'a}sz, \emph{Normal hypergraphs and the perfect graph
  conjecture}, Discrete Mathematics \textbf{2} (1972), no.~3, 253--267.

\bibitem{mcdonald1974finite}
Bernard~R McDonald, \emph{Finite rings with identity}, Harcel Dekker (1974).

\bibitem{menara2019stability}
T.~Menara, G.~Baggio, D.~S. Bassett, and F.~Pasqualetti, \emph{Stability
  conditions for cluster synchronization in networks of heterogeneous
  {Kuramoto} oscillators}, IEEE Transactions on Control of Network Systems
  \textbf{7} (2019), no.~1, 302--314.

\bibitem{paleygraph}
J{\'a}n Min{\'a}{\v{c}}, Lyle Muller, Tung~T Nguyen, and Nguyen~Duy T{\^a}n,
  \emph{On the {Paley} graph of a quadratic character}, To appear in
  Mathematica Slovaca (2023).

\bibitem{motter2013spontaneous}
Adilson~E Motter, Seth~A Myers, Marian Anghel, and Takashi Nishikawa,
  \emph{Spontaneous synchrony in power-grid networks}, Nature Physics
  \textbf{9} (2013), no.~3, 191--197.

\bibitem{muller1989incremental}
John~H Muller and Jeremy Spinrad, \emph{Incremental modular decomposition},
  Journal of the ACM (JACM) \textbf{36} (1989), no.~1, 1--19.

\bibitem{nguyen2022broadcasting}
Tung~T Nguyen, Roberto~C Budzinski, Federico~W Pasini, Robin Delabays, J{\'a}n
  Min{\'a}{\v{c}}, and Lyle~E Muller, \emph{Broadcasting solutions on networked
  systems of phase oscillators}, Chaos, Solitons \& Fractals \textbf{168}
  (2023), 113166.

\bibitem{paley1933orthogonal}
Raymond~EAC Paley, \emph{On orthogonal matrices}, Journal of Mathematics and
  Physics \textbf{12} (1933), no.~1-4, 311--320.

\bibitem{pierce1982associative}
Richard~S. Pierce, \emph{Associative algebras}, Studies in the History of
  Modern Science, vol.~9, Springer-Verlag, New York-Berlin, 1982. \MR{674652}

\bibitem{schaub2016graph}
M.~T. Schaub, N.~O'Clery, Y.~N. Billeh, J.~Delvenne, R.~Lambiotte, and
  M.~Barahona, \emph{Graph partitions and cluster synchronization in networks
  of oscillators}, Chaos: An Interdisciplinary Journal of Nonlinear Science
  \textbf{26} (2016), no.~9, 094821.

\bibitem{silk2018can}
Matthew~J Silk, Kelly~R Finn, Mason~A Porter, and Noa Pinter-Wollman, \emph{Can
  multilayer networks advance animal behavior research?}, Trends in ecology \&
  evolution \textbf{33} (2018), no.~6, 376--378.

\bibitem{spinrad1992p4}
Jeremy Spinrad, \emph{P4-trees and substitution decomposition}, Discrete
  applied mathematics \textbf{39} (1992), no.~3, 263--291.

\bibitem{joined_union}
Dragan Stevanovi\'{c}, \emph{Large sets of long distance equienergetic graphs},
  Ars Math. Contemp. \textbf{2} (2009), no.~1, 35--40.

\bibitem{strogatz2001exploring}
Steven~H Strogatz, \emph{Exploring complex networks}, Nature \textbf{410}
  (2001), no.~6825, 268--276.

\bibitem{tiberi2017synchronization}
L.~Tiberi, C.~Favaretto, M.~Innocenti, D.~S. Bassett, and F.~Pasqualetti,
  \emph{Synchronization patterns in networks of {Kuramoto} oscillators: A
  geometric approach for analysis and control}, 2017 IEEE 56th Annual
  Conference on Decision and Control (CDC), IEEE, 2017, pp.~481--486.

\bibitem{watts1998collective}
Duncan~J Watts and Steven~H Strogatz, \emph{Collective dynamics of small-world
  networks}, Nature \textbf{393} (1998), no.~6684, 440--442.

\end{thebibliography}
\end{document}